\tikzstyle{every picture}=[line width=.65pt,minimum size=3pt,every label/.append style={font=\small},label distance=-2pt]
\tikzstyle{every node}=[font=\small]
\tikzset{>=stealth}
\tikzstyle{vtx}=[circle,draw,thick,fill=black!50]
\tikzstyle{vtxr}=[circle,draw,thick,fill=red!50]
\tikzstyle{vtxp}=[circle,draw,thick,fill=pink!50]
\tikzstyle{vtxb}=[circle,draw,thick,fill=blue!30]
\tikzstyle{vtxgre}=[circle,draw,thick,fill=green!60]
\newcommand{\R}{\mathds{R}}
\newcommand{\Q}{\mathds{Q}}
\newcommand{\Z}{\mathds{Z}}
\newcommand{\N}{\mathds{N}}
\newcommand{\floor}[1]{\left\lfloor#1\right\rfloor}
\newcommand{\conv}{\operatorname{conv}}
\newcommand{\rec}{\operatorname{rec}}
\renewcommand{\vert}{\operatorname{vert}}
\renewcommand{\epsilon}{\varepsilon}
\newcommand{\rc}{\operatorname{rc}}
\renewcommand{\int}{\operatorname{int}}
\newcommand{\lift}{\operatorname{lift}}
\newcommand{\clift}{\operatorname{clift}}
\newcommand{\cF}{\mathcal{F}}
\newcommand{\cX}{\mathcal{X}}
\newcommand{\cY}{\mathcal{Y}}
\newcommand{\cT}{\mathcal{T}}
\newcommand{\setcond}[2]{\left\{ #1 \, :\, #2 \right\}} 
\newcommand{\tih}{{\tilde{h}}}
\newcommand{\card}[1]{| #1 |}
\newcommand{\sucn}{\operatorname{sucn}}
\newcommand{\slcn}{\operatorname{slcn}}
\newcommand{\eps}{\varepsilon}
\newcommand{\define}{\coloneqq}
\newtheorem{prop}{Proposition}
\newtheorem{obs}[prop]{Observation}
\newtheorem{remark}[prop]{Remark}
\newtheorem{lemma}[prop]{Lemma}
\newtheorem{cor}[prop]{Corollary}
\newtheorem{theorem}[prop]{Theorem}
\theoremstyle{definition}
\newtheorem*{question}{Questions}
\begin{document}

\title{The role of rationality in integer-programming relaxations}
\author[1]{Manuel Aprile}
\author[2]{Gennadiy Averkov}
\author[1]{Marco Di Summa}
\author[3]{Christopher Hojny}
\affil[1]{%
Dipartimento di Matematica ``Tullio Levi-Civita'',
Universit\`a degli Studi di Padova, 
Via Trieste 63,
35121 Padova, Italy, emails disumma@math.unipd.it, manuelf.aprile@gmail.com
}
\affil[2]{%
  BTU Cottbus-Senftenberg\\
  Platz der Deutschen Einheit 1\\
  03046 Cottbus, Germany\\
  \emph{email} averkov@b-tu.de
}
\affil[3]{%
  Eindhoven University of Technology\\
  Combinatorial Optimization Group\\
  PO Box~513\\
  5600 MB Eindhoven, The Netherlands\\
  \emph{email} c.hojny@tue.nl
}

\maketitle

\begin{abstract}

  For a finite set~$X \subset \Z^d$ that can be represented as $X = Q \cap
  \Z^d$ for some polyhedron $Q$, we call $Q$ a relaxation of $X$ and define 
  the relaxation complexity~$\rc(X)$ of~$X$ as the least number of facets
  among all possible  relaxations~$Q$ of~$X$. The rational relaxation
  complexity~$\rc_\Q(X)$ restricts the definition of~$\rc(X)$ to rational
  polyhedra $Q$.
  In this article, we focus on~$X = \Delta_d$, the vertex set of the standard simplex, 
   which consists of the null
  vector and the standard unit vectors in~$\R^d$.
  We show that~$\rc(\Delta_d) \leq d$ for every~$d
  \geq 5$.
  That is, since $\rc_\Q(\Delta_d)=d+1$,
   irrationality can reduce the minimal size of relaxations.
  This answers an open question posed by Kaibel and Weltge (Lower bounds on
  the size of integer programs without additional variables,
  \emph{Mathematical Programming}, 154(1):407--425, 2015).
  Moreover, we prove the asymptotic statement~$\rc(\Delta_d) \in
  O(\frac{d}{\sqrt{\log(d)}})$, which shows that the ratio $\rc(\Delta_d)/\rc_\Q(\Delta_d)$ goes to~$0$, as $d\to \infty$. 
\end{abstract}

\section{Introduction}
Finding compact representations of a large set of integer points in a high-dimensional space is a recurring theme in combinatorial optimization, where these points usually describe the feasible solutions to a discrete problem. A classical approach is, given a set of points of interest $X\subseteq \Z^d$, to find a polyhedron $Q\subseteq \R^d$ whose integer points are exactly those in $X$, i.e., such that $Q\cap \Z^d = X$. Such a polyhedron is called a \emph{relaxation} of $X$. By feeding a  relaxation of $X$ to an integer programming solver, one can optimize a linear objective function over $X$. 
The \emph{relaxation complexity} of $X$, denoted by $\rc(X)$, is the smallest number of facets of a relaxation of $X$. This concept was introduced in \cite{kaibel2015lower} (see also \cite{weltge2015diss}) as a measure of the complexity of a set of integer points.

A basic question is to determine the relaxation complexity of simple sets
like the vertices of a hypercube or a simplex.
For instance, one has $\rc(\{0,1\}^d)\leq 2d$ by counting the number of
facets of the 0/1 hypercube. However, a relaxation with only $d+1$ facets
exists~\cite{kaibel2015lower}.
Moreover, it is easy to see that any \emph{bounded} relaxation of a full-dimensional set
$X\subseteq \Z^d$ must have at least $d+1$ facets, and one can further show
that the 0/1 hypercube does not admit any unbounded relaxation
\cite{kaibel2015lower}, implying $\rc(\{0,1\}^d)= d+1$.
In general, however, a finite set of points can admit an unbounded
relaxation. Notice that such a relaxation must have irrational rays only (assuming $X\ne\emptyset$):
indeed, starting from a point of $X$ and applying a rational ray would
produce infinitely many integer points.
As a prominent example, the authors of \cite{kaibel2015lower} describe a 5-dimensional
simplex (that is unimodularly equivalent to the standard simplex) and an
unbounded, irrational relaxation of its vertex set, which however has at least six facets.
Inspired by this result, they pose the following questions.
\begin{question}
  Let~$\Delta_d$ be the set consisting of the null vector and the
  canonical unit vectors in~$\R^d$, and let~$\rc_\Q(\cdot)$
  be the restriction of~$\rc(\cdot)$ to rational relaxations.
  \begin{enumerate}[label=(Q\arabic*)]
  \item\label{Q1} Does~$\rc(\Delta_d) = \rc_\Q(\Delta_d) = d+1$ hold for
    all positive integers~$d$?
  \item\label{Q2} If~$X \subseteq \Z^d$ is finite, does~$\rc(X) \geq
    \dim(X) + 1$ hold?
    Here, $\dim(X)$ is the dimension of the affine hull of~$X$.
  \item\label{Q3} If~$X \subseteq \Z^d$ is finite, does~$\rc(X) =
    \rc_\Q(X)$ hold?
  \end{enumerate}
\end{question}

In this paper, we show that $\rc(\Delta_d)<\rc_\Q(\Delta_d)$ for $d\geq 5$.
This gives negative answers to all the three questions above.
The starting point is an example in
dimension~5.
 
\begin{theorem}\label{thm:dim5}
  We have $\rc(\Delta_5)=5$.
\end{theorem}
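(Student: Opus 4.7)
My plan for Theorem \ref{thm:dim5} is to establish both $\rc(\Delta_5)\leq 5$ (the novel content) and the matching lower bound $\rc(\Delta_5)\geq 5$, which should follow from a general lineality argument.

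For the upper bound, I would exhibit an unbounded polyhedron $Q\subset\R^5$ with exactly five facets and $Q\cap\Z^5=\Delta_5$. Two observations constrain the shape of $Q$: because $\Delta_5$ consists of $6=\dim(\Delta_5)+1$ affinely independent integer points, any bounded relaxation has at least six facets and therefore $Q$ must be unbounded; and because a rational recession ray from an integer point produces infinitely many integer points, the recession cone of $Q$ must be irrational. A natural candidate is a translated $5$-dimensional simplicial cone $Q=p+\cone(v_1,\ldots,v_5)$ with suitably chosen (irrational) apex $p$ and rays $v_i$. The inclusion $\Delta_5\subseteq Q$ forces the six vectors $\{q-p:q\in\Delta_5\}$ into a $5$-dimensional simplicial cone, so one of them must be a non-negative combination of the remaining five, yielding a parameterized family of candidate configurations to search within.

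The main obstacle will be to verify that no integer point outside $\Delta_5$ lies in $Q$. Any such point $z\in\Z^5$ corresponds to a representation $z=p+\sum\mu_i v_i$ with $\mu_i\geq 0$, imposing diophantine constraints on the $\mu_i$. My strategy is to choose the coordinates of $p$ and the $v_i$ so that appropriate rational independence (for instance, algebraic independence over $\Q$ of certain entries) forces the admissible $\mu_i$ to lie in a discrete set. Once this is arranged, the exclusion of integer points reduces to a finite case check against nearby candidates such as $\pm e_i$, $e_i+e_j$, and $2e_i$.

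For the lower bound, suppose for contradiction that $Q$ has at most four facets. Since $\aff(\Delta_5)=\R^5$, we have $\dim(Q)=5$; since the constraint matrix of $Q$ then has rank at most $4$, the lineality space $L$ of $Q$ has dimension at least $1$. If $L$ contained a nonzero integer vector $v$, then both $v=0+v$ and $-v$ would lie in $Q\cap\Z^5=\Delta_5\subseteq\R_{\geq 0}^5$, forcing $v=0$; hence $L\cap\Z^5=\{0\}$. Projecting onto $\R^5/L$, the image $\pi(Q)$ is full-dimensional while $\pi|_{\Z^5}$ is injective; a Kronecker-type density argument then shows that $\pi(\Z^5)$ meets the interior of $\pi(Q)$ in infinitely many points, each lifting by lineality to an integer point of $Q$, contradicting $|Q\cap\Z^5|=6$. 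The minor technical point here is that density of $\pi(\Z^5)$ requires $L^\perp\cap\Z^5=\{0\}$ rather than $L\cap\Z^5=\{0\}$; when $L^\perp$ itself contains integer vectors, I would first quotient out the rational subspace of $L^\perp$ and iterate the argument in the resulting lower-dimensional setting.
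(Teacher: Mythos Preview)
Your upper-bound plan has a fatal structural flaw. A translated $5$-dimensional simplicial cone $Q=p+\cone(v_1,\ldots,v_5)$ is full-dimensional in $\R^5$, so its interior contains balls of arbitrarily large radius, and any ball of radius exceeding $\sqrt{5}/2$ contains a lattice point; hence $Q\cap\Z^5$ is infinite regardless of how irrational $p$ and the $v_i$ are, and no ``rational independence'' argument can change this. In fact, a full-dimensional polyhedron in $\R^5$ with exactly five facets is either a translated simplicial cone (when the five facet normals are linearly independent) or has a nontrivial lineality space (when they are dependent). You chose the first branch, which is impossible; the paper takes the second. After passing to a unimodular copy $\Delta$ of $\Delta_5$, the paper's relaxation is $\pi^{-1}(P_{\nicefrac{1}{8}})$, where $\pi\colon\R^5\to\R^4$ is the projection along the irrational line $\ell=\R\cdot(0,0,0,1,\sqrt{2})$ and $P_{\nicefrac{1}{8}}\subset\R^4$ is an explicit polytope with five facets satisfying $P_{\nicefrac{1}{8}}\cap(\Z^3\times\R)=\pi(\Delta)$. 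The verification of this last equality (Fourier--Motzkin elimination plus a finite case check) is the real content of the upper bound; a search heuristic without an actual candidate does not constitute a proof.

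Your lower-bound sketch also has a genuine gap: $\pi(\Z^5)$ need not be dense in $\R^5/L$ even when $L\cap\Z^5=\{0\}$ (take $L=\R\cdot(1,\sqrt{2},0,0,0)$), and the proposed ``quotient and iterate'' fix is left undefined. The paper sidesteps this entirely via monotonicity: restricting any relaxation of $\Delta_5$ to the hyperplane $x_5=0$ gives a relaxation of $\Delta_4$, so $\rc(\Delta_5)\ge\rc(\Delta_4)=5$, the value $\rc(\Delta_4)=5$ having been established in~\cite{averkov2021complexity}.
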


From this example, we can easily show that irrationality helps to reduce
the relaxation complexity also in higher dimensions.
Thus, Questions~\ref{Q1}--\ref{Q3} have a negative answer for every~$d \geq
5$.

\begin{cor}\label{cor:dimd}
	For every $d \in \Z_{>0}$, one has 
	\[
\rc(\Delta_d) \le 5 \floor{\frac{d+1}{6}} + \Bigl((d+1) \bmod 6\Bigr),
\]
where $\floor{\,.\,}$ denotes the rounding-down operation and $(d+1) \bmod 6$ is the remainder of the division of $(d+1)$ by $6$. 
\end{cor}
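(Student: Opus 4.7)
The plan is to deduce the corollary from Theorem~\ref{thm:dim5} by an inductive argument establishing the recurrence $\rc(\Delta_{d+6}) \le \rc(\Delta_d) + 5$. Combined with the base cases $\rc(\Delta_d) \le d+1$ for $d \in \{0, 1, 2, 3, 4\}$ (via the natural $(d+1)$-facet simplex relaxation $\{x \in \R^d : x_i \ge 0,\ \sum_i x_i \le 1\}$) and $\rc(\Delta_5) \le 5$ from Theorem~\ref{thm:dim5}, this recurrence yields the bound $5 \floor{(d+1)/6} + ((d+1) \bmod 6)$ directly by induction on $\floor{(d+1)/6}$: writing $d + 1 = 6q + r$ with $0 \le r < 6$, we descend from $\Delta_d$ to $\Delta_{d-6q}$ while paying $5$ facets per step, then close out with the appropriate base case.

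The core tool is a direct-sum lemma for $\Delta$-like sets. Writing $\Delta_{d+6} = \Delta_d \oplus \Delta_6$, where $\oplus$ glues the two sets at the origin, the construction proceeds as follows. Given relaxations $P_X \subset \R^a$ of $X$ with $\rc(X)$ facets and $P_Y \subset \R^b$ of $Y$ with $\rc(Y)$ facets, both containing the inequality ``sum of all coordinates $\le 1$'' among their facets, form the polyhedron $\{(x, y) \in P_X^- \times P_Y^- : \sum_i x_i + \sum_j y_j \le 1\}$, where $P_X^-$ and $P_Y^-$ denote $P_X$ and $P_Y$ with the ``top'' facet removed. This has $(\rc(X)-1)+(\rc(Y)-1)+1 = \rc(X) + \rc(Y) - 1$ facets and contains $X \oplus Y$. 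Verifying the integer-point condition reduces to the observation that any integer point $(x,y)$ with non-zero coordinate sums on \emph{both} sides violates the coupling inequality, while the removed facets contribute only additional integer points with coordinate sum at least $2$ (thus also violating the coupling when combined with any non-zero complement).

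Applying this lemma first with $\Delta_6 = \Delta_5 \oplus \Delta_1$ (using Theorem~\ref{thm:dim5} and the natural interval relaxation of $\Delta_1$) gives $\rc(\Delta_6) \le 5 + 2 - 1 = 6$. Applying it again with $\Delta_{d+6} = \Delta_d \oplus \Delta_6$, the relaxation of $\Delta_d$ from the inductive hypothesis together with the $6$-facet relaxation of $\Delta_6$ just obtained yields $\rc(\Delta_{d+6}) \le \rc(\Delta_d) + 6 - 1 = \rc(\Delta_d) + 5$, closing the induction.

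The main obstacle is the structural assumption on the relaxation of $\Delta_5$: the direct-sum lemma requires the $5$-facet relaxation $Q_5$ from Theorem~\ref{thm:dim5} to have $\sum_i x_i \le 1$ among its five facets, so that removing a ``top'' facet is meaningful in the construction. This would need to be verified against the explicit construction in the proof of Theorem~\ref{thm:dim5}, or if necessary arranged by a suitable unimodular change of coordinates on $\Delta_5$. Granted this, the induction propagates cleanly, since the coupling inequality introduced at each step naturally plays the role of the new ``sum $\le 1$'' facet available for the subsequent application of the lemma.
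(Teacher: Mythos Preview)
Your direct-sum lemma hinges on the $5$-facet relaxation of $\Delta_5$ having $\sum_i x_i \le 1$ as one of its five facets, and you flag this as something to be checked. Unfortunately it fails, and not for a reason a change of coordinates can repair. In the paper's explicit construction (inequalities \eqref{constr:1}--\eqref{constr:5} lifted along the line $\ell$), each of the five facets is active on at most four of the six points of the simplex; none is active on five. The hyperplane $\sum_i x_i = 1$, by contrast, passes through all five non-origin vertices of $\Delta_5$, so it cannot coincide with any facet of this relaxation. A unimodular self-map of $\Delta_5$ only permutes the six points and hence permutes the vertex--facet incidences; it cannot manufacture a facet incident to five of them. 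So to carry your argument through you would need a genuinely different $5$-facet relaxation of $\Delta_5$ with this extra structural property, and nothing in Theorem~\ref{thm:dim5} supplies one. That is the gap.

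For comparison, the paper sidesteps this issue by using the \emph{free join} $X \ast Y \subset \Z^{k+\ell+1}$ rather than the free sum $X \oplus Y \subset \Z^{k+\ell}$. The extra coordinate $z$ lets one write the combined relaxation as $Ax \le b(1-z),\; Cy \le dz$, which needs no particular facet of either input relaxation and yields $\rc(X\ast Y)\le \rc(X)+\rc(Y)$. Since $\Delta_k \ast \Delta_\ell$ is unimodularly equivalent to $\Delta_{k+\ell+1}$, one gets the same recurrence $\rc(\Delta_{d+6}) \le \rc(\Delta_d)+5$ (via $\Delta_d \cong \Delta_{d-6}\ast\Delta_5$) without any assumption on the shape of the $\Delta_5$ relaxation.
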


Since the relaxation complexity and rational relaxation complexity differ,
one might wonder about the ratio $\rc(\Delta_d)/ \rc_\Q(\Delta_d)$ of these two
quantities. By Corollary~\ref{cor:dimd}, this ratio is bounded from above by $\frac{5}{6} + o(d),$ as $d \to \infty$. 
With a significantly more involved construction, we can show that $\rc(\Delta_d)/ \rc_\Q(\Delta_d) \to 0,$ as $d \to \infty$, by providing the following asymptotic estimate:

\begin{theorem}\label{thm:main}
We have $\rc(\Delta_d)\in O\left(\frac{d}{\sqrt{\log d}}\right)$.
\end{theorem}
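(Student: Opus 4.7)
Corollary~\ref{cor:dimd} only yields a linear bound $\rc(\Delta_d) \leq \tfrac{5}{6}d + O(1)$, obtained by gluing together copies of the $5$-dimensional construction, each saving a single facet. To push the bound down to $O(d/\sqrt{\log d})$, the plan is to design a building block of growing size~$k$ whose per-facet savings scale with $\sqrt{\log k}$, and then (if needed) lift it to $\R^d$ by a block composition. The target is a relaxation of $\Delta_k$ with $O(k/\sqrt{\log k})$ facets; composing $d/k$ coordinate-disjoint copies and adding a single global inequality $\sum_{i=1}^{d} x_i \leq 1$ then yields a relaxation of $\Delta_d$ with $O(d/\sqrt{\log k})$ facets, optimized by taking $k$ polynomial in $d$ (or $k = d$ and skipping the composition altogether).

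The heart of the argument is the construction of the sublinear relaxation of $\Delta_k$. The idea is to exploit irrationality systematically: a single halfspace $a^\top x \leq b$ with generic irrational coefficients can simultaneously exclude many lattice points of $\Z^k \setminus \Delta_k$, something no rational facet can do without containing a huge portion of its boundary lattice. The presence of the factor $\sqrt{\log k}$ in the target bound strongly suggests a Behrend-type additive-combinatorial input: one would encode into the coefficients $a_i$ a sparse but well-spread set $S \subseteq [N]$ of density $\exp(-C\sqrt{\log N})$, chosen so that every forbidden additive configuration among the $a_i$'s corresponds to a lattice point cut off by at least one facet. The validity of the construction then reduces to two exhaustive checks: every facet must contain $\Delta_k$, and every integer point of $\Z^k \setminus \Delta_k$ must violate at least one facet.

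The composition step, once the building block is available, is routine: the block inequalities force every integer point to have at most one positive coordinate per block, and the global inequality $\sum_{i=1}^{d} x_i \leq 1$ reduces this to at most one positive coordinate overall, identifying the integer feasible region with $\Delta_d$. The main obstacle is the design and verification of the sublinear building block---both the choice of irrational coefficients (encoding the additive-combinatorial structure) and the rigorous proof that no stray integer point survives. In particular, the $\sqrt{\log k}$ improvement cannot come from a naive pigeonhole or averaging argument, and it will require careful number-theoretic bookkeeping to tie the geometry of the irrational facets to the sparse structure of the Behrend-type set.
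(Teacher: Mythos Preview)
Your composition step is fine (and essentially equivalent to the paper's free-join subadditivity, Lemma~\ref{lem:subadditive}), but it is also redundant: if you could build a relaxation of $\Delta_k$ with $O(k/\sqrt{\log k})$ facets for arbitrary $k$, you would just take $k=d$ and be done. So the entire content of the theorem is the ``building block'', and that is exactly the part your proposal leaves open. You explicitly acknowledge that ``the main obstacle is the design and verification of the sublinear building block'', and then offer only a heuristic: encode a Behrend-type set into the irrational coefficients and hope that the forbidden additive configurations line up with the lattice points to be excluded. No such construction is given, and there is no indication of which additive relations among $\Delta_k$-coefficients a Behrend set would control, or why the density $\exp(-c\sqrt{\log N})$ would translate into a facet count of $O(k/\sqrt{\log k})$. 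As written, this is a restatement of the goal, not a proof.

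The paper's mechanism is entirely different, and the $\sqrt{\log d}$ does not come from additive combinatorics. One takes $d=2^k-1$, so that a unimodular copy of $\Delta_d$ projects bijectively onto $\{0,1\}^k$; irrationality is used once, to make this projection injective on $\Z^d$ (Lemmas~\ref{lem:inj:proj} and~\ref{lem:X:oplus:Delta:special}). The problem then becomes: find a $(k,1)$-mixed-integer relaxation of a lift $\lift_h(\{0,1\}^k)$ with few facets. By Proposition~\ref{prop:relaxation} this reduces to covering all vertices of $\clift_h(\{0,1\}^k)$ by simplicial upper and lower facets. Choosing $h$ to induce the staircase triangulation, the covering number is governed by the minimum number of maximal chains needed to cover the Boolean lattice $2^{[k-1]}$, which by Dilworth/Sperner is $\binom{k-1}{\lfloor (k-1)/2\rfloor}\sim 2^k/\sqrt{k}$. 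Since $k\approx\log_2 d$, this yields $O(d/\sqrt{\log d})$. The saving factor $\sqrt{\log d}$ is thus the $\sqrt{k}$ in the central binomial coefficient, not a Behrend-type density bound.
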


The remainder of this article is structured as follows.
First, we mention related literature (Section~\ref{sec:literature}) and
introduce the notation and terminology that we use throughout the article
(Section~\ref{sec:notation}).
In Section~\ref{sec:thm1}, we prove Theorem~\ref{thm:dim5} and
Corollary~\ref{cor:dimd}, and, based on the proof of Theorem~\ref{thm:dim5},
we derive our strategy for proving Theorem~\ref{thm:main}.
The proof of the latter theorem is then presented in Section~\ref{sec:thm3}.
Finally, we list some open problems in Section~\ref{outlook}.

\subsection{Related Literature}
\label{sec:literature}

The relaxation complexity has been formally introduced
in~\cite{kaibel2015lower}, where it is also shown how to derive lower
bounds for it.
Among others, this lower bound shows that any relaxation of a natural
encoding of Hamiltonian cycles via binary vectors needs exponentially many
inequalities in general.
For~$X \subseteq \{0,1\}^d$, an upper bound on the minimal number of
inequalities needed to separate~$X$ and~$\{0,1\}^d \setminus X$ has been
derived in~\cite{jeroslow1975ondefining}.
Complementing the upper and lower bounds, \cite{averkov2021complexity}
derived criteria that guarantee that~$\rc(X)$ can be computed by solving a
mixed-integer program.
More sophisticated mixed-integer programming techniques to compute~$\rc(X)$
have been discussed recently in~\cite{AverkovEtAl2022}.
Independently from the criteria by~\cite{averkov2021complexity} that
guarantee computability in arbitrary dimensions, \cite{weltge2015diss} shows
that~$\rc(X)$ for~$X \subseteq \Z^d$ is
computable if~$d = 2$, and \cite{averkov2021complexity} extended this
result to $d = 3$.
In  \cite{AverkovEtAl2021} it was shown that, if~$d = 2$, then~$\rc(X)$ can
be found in polynomial time. Furthermore, \cite{AverkovEtAl2021} also addresses issues behind the computability of $\rc(X)$ and $\rc_\Q(X)$ for an arbitrary dimension and reveals that having $\rc(X)<\rc_\Q(X)$ is an additional obstacle on the way to computing $\rc(X)$ and $\rc_\Q(X)$, as certain natural computational procedures fail to stop and so do not give finite algorithms in this case. For more details, see Section~\ref{outlook}.

Extension complexity is a related, but more well studied concept than relaxation complexity (see \cite{conforti2013extended} for a survey). The \emph{extension complexity} of a polyhedron $P\subseteq \R^d$ is the minimum number of facets of a polyhedron $Q\subseteq \R^{d+d'}$ that projects down to $P$. If $P$ is the convex hull of our set of integer points $X$, the extension complexity of $P$ measures how compactly one can describe $X$ using a linear formulation with additional variables, while the relaxation complexity of $X$ allows to use integer formulations, but with no additional variables. The existence of small extended formulations for a given polyhedron~$P$ is related to the existence of small non-negative factorizations of a matrix related to $P$ by the celebrated Yannakakis' Theorem \cite{yannakakis1991expressing}. Finding non-negative factorizations of (non-negative) matrices is a problem of independent interest, and in 1993 it was asked \cite{cohen1993nonnegative} whether using irrational numbers could lead to smaller factorizations of rational matrices. This was recently answered in \cite{chistikov2017nonnegative} and \cite{shitov2016nonnegative}: irrational numbers indeed help, as there are rational matrices whose minimum size factorizations require irrational numbers. As far as the authors know, the examples given in the aforementioned papers do not correspond to polyhedra, hence it is open whether using irrational numbers leads to smaller extended formulations for rational polyhedra.

Despite the lower bounds and the computability results mentioned above, the existence of small relaxations for a given set of points does not have a general characterization similar to what is known for  extended formulations in terms of non-negative factorizations and communication protocols \cite{faenza2015extended, yannakakis1991expressing}. Moreover, we lack general approaches to construct relaxations, while several efficient constructions for extended formulations are known \cite{aprile2022extended, aprile2020extended,  kaibel2010branched, tiwary2020extension}. This paper can be seen as one step forward in this direction.

Interestingly, sets of points like those corresponding to the spanning trees of a graph do not admit relaxations of subexponential size \cite{kaibel2015lower}, but do admit polynomial-sized extended formulations \cite{aprile2021smaller,martin1991using,wong1980integer}; the opposite happens for other sets, for instance those corresponding to stable sets or matchings in a graph, which admit simple, linear-sized relaxations, but no extended formulation of subexponential size \cite{fiorini2015exponential, rothvoss2017matching}.

\subsection{Notation and Terminology}\label{sec:notation} 

We use $\subseteq$ and $\subset$ to denote inclusion and strict inclusion, respectively. 
As usual, we define $[m] \define \{1,\ldots,m\}$ for $m \in \Z_{>0}$ and $[m] \define \emptyset$ for $m =0$. 
We denote by $\mathbf{0}_a$ (resp.~$\mathbf{0}_{a, b}$) the all-zero vector
(resp.\ all-zero matrix) of  size $a$ (resp.~$a \times b$), writing only $\boldsymbol{0}$
when the dimension is clear from the context.
We denote the \emph{standard unit vectors} in $\R^d$ by $e_1,\dots,e_d$.
Using this notation, $\Delta_d = \{\mathbf{0},e_1,\dots,e_d\}$.
The \emph{dimension}~$\dim(X)$ of $X \subseteq \R^d$ is the dimension of its
affine hull.
The \emph{cardinality} of~$X$ is denoted by~$|X|$ and its \emph{convex
  hull} is denoted by~$\conv(X)$.
For two sets~$X$ and~$Y$, we denote by~$Y^X$ the \emph{set of all maps} from $X$
to $Y$.
If~$X$ is finite, then $\R^X$ is a $|X|$-dimensional vector space over
$\R$ and, thus, it has the standard Euclidean topology.
As usual, for $\phi\colon  X \to Z$ and $Y \subseteq X$, the notation~$\phi|_Y$
is used to denote the \emph{restriction} of the map $\phi$ to $Y$.

We call elements of $\Z^d$ \emph{integer points/vectors} and we call elements of~\mbox{$\Z^d \times \R^m$} \emph{mixed-integer points/vectors}. 
An affine transformation $\phi\colon \R^d \to \R^d$ is said to be \emph{unimodular} if it  maps the integer lattice onto itself, that is, $\phi(\Z^d) = \Z^d$. It is known and easy to check (see, e.g., \cite{Schrijver1978}) that $\phi$ is unimodular if and only if it can be represented as $\phi(x) = U x + v$ where~$v \in \Z^d$ and $U \in \Z^{d \times d}$ is a unimodular matrix, which is an integer matrix with determinant equal to one in absolute value. 

The \emph{recession cone} $\rec(P)$ of a nonempty polyhedron $P \subseteq \R^d$ is the set of all $u \in \R^d$ satisfying $p+u \in P$ for each $p \in P$. The recession cone of a polyhedron $P\ne\emptyset$ given by an inequality description $A x \le b$ is a  polyhedral cone, which can be given by the inequality description $A x \le \mathbf{0}$. 

For $X \subseteq Y \subseteq \R^k$, a polyhedron $P$ satisfying $X = P \cap Y$ is called a (polyhedral) \emph{relaxation} of $X$ within $Y$. 
By $\rc(X,Y)$ we denote the minimal $k$ such that $X = P \cap Y$ holds for some polyhedron $P$ with $k$ facets. If such $k$ does not exist, we let $\rc(X,Y ) =\infty$. We call $\rc(X,Y)$ the \emph{relaxation complexity} of $X$ within $Y$. It is clear that $\rc(X,Y)$ is monotonically non-decreasing with respect to $Y$ in the sense that $X \subseteq Y \subseteq Z$ implies $\rc(X,Y) \le \rc(X,Z)$. 
For $X \subseteq \Z^d$, we  call $\rc(X)\define \rc(X,\Z^d)$  the (integer)
\emph{relaxation complexity} while,  for $X \subseteq \Z^d \times \R^n$, we
call $\rc(X,\Z^d \times \R^n$) the $(d,n)$-\emph{mixed-integer relaxation
  complexity} or the \emph{mixed-integer relaxation complexity} with
respect to $d$ integer and~$n$ real variables.

We also provide some necessary background from the theory of irrational numbers. The field $\R$ is a vector space over $\Q$ and it can be seen in different ways that $\R$ is infinite-dimensional over $\Q$. One easy non-constructive way to see this is to observe that~$\R$ is uncountable, whereas $\Q$ and by this also every finite-dimensional vector space over~$\Q$ is countable. This observation shows that for every $k \in \Z_{>0}$ there exists numbers $a_1,\ldots,a_k \in \R$ that are linearly independent over $\Q$. As a consequence, we also see that for every $k \in \Z_{>0}$ there exist numbers $b_1,\ldots,b_k \in \R$ such that $1,b_1,\ldots,b_k$ are linearly independent over $\Q$. To show the latter, one can start with numbers $a_0,\ldots,a_k \in \R$ linearly independent over $\Q$ and fix $b_i \define a_i/a_0$ for $i\in[k]_0$. Alternatively, the sequence $b_0=1,b_1,\ldots,b_k$ of numbers linearly independent over $\Q$ can also be obtained by choosing an arbitrary algebraic number $c \in \R$ of algebraic degree $k+1$ and fixing $b_i = c^i$ for $i\in[k]_0$. See~\cite[Sect.~4]{gruber1987geometry} for more details.

\section{The Projection Approach}
\label{sec:thm1}

In this section, we prove Theorem~\ref{thm:dim5} and
Corollary~\ref{cor:dimd}.
Then, we will discuss how our proof strategy can be generalized to show
Theorem~\ref{thm:main}.

\subsection{The Counterexample in Dimension Five}\label{sec:counterex}
Define 
$$\Delta\define \{\boldsymbol{0}, e_1,e_2, e_3, (1,0,1,1,0), (0,1,1,0,1)\}\subset \Z^5.$$ 
$\Delta$ is the vertex set of a 5-dimensional simplex and is unimodularly equivalent to $\Delta_5$. We work with $\Delta$ instead of $\Delta_5$ because this simplifies the calculations: giving a relaxation of~$\Delta$ will imply the existence of a relaxation of $\Delta_5$ of the same size.

Let $\ell\subset \R^5$ be the line spanned by the vector
$(0,0,0,1,\sqrt{2})$. In \cite{kaibel2015lower} it is shown that
$\conv(\Delta)+\ell$ is an (unbounded) relaxation of $\Delta$.
However, such a relaxation has more than five facets.
In order to give our relaxation, it is easier to reduce ourselves to a
4-dimensional space by projecting on a suitable hyperplane along
line $\ell$, and looking for a relaxation of the projection of $\Delta$
within the projection of $\Z^5$.
In particular, we use the following observation:

\begin{obs}\label{obs:proj}
Let $\pi\colon \R^5 \to \R^4$ be the projection 
\[
	\pi(x_1,x_2,x_3,x_4,x_5) \define (x_1,x_2,x_3, x_4 - \frac{1}{\sqrt{2}} x_5)
\] and $P \subseteq \R^4$ be a polyhedron satisfying $P\cap (\Z^3 \times \R) = \pi(\Delta)$. Then $Q\define(P\times \{0\}) + \ell$ is a relaxation of $\Delta$. In other words, one has $\rc(\Delta,\Z^5)\leq \rc(\pi(\Delta),\Z^3\times \R)$.

\end{obs}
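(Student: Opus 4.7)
The plan is to verify the two inclusions of the set equality $Q\cap\Z^5=\Delta$ and then match facet counts. It is convenient first to notice that $Q=\pi^{-1}(P)$: writing an arbitrary $y\in\R^5$ as $(\pi(y),0)+\tfrac{y_5}{\sqrt 2}(0,0,0,1,\sqrt 2)$ shows that $y\in(P\times\{0\})+\ell$ if and only if $\pi(y)\in P$. With this reformulation both inclusions and the facet comparison become transparent.

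The forward inclusion $\Delta\subseteq Q$ is essentially a formality. For $x\in\Delta\subseteq\Z^5$, the first three coordinates of $\pi(x)$ are integers, so $\pi(x)\in\Z^3\times\R$; combined with $\pi(x)\in\pi(\Delta)=P\cap(\Z^3\times\R)$ this gives $\pi(x)\in P$ and hence $x\in\pi^{-1}(P)=Q$.

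The reverse inclusion $Q\cap\Z^5\subseteq\Delta$ is where the actual content lies. Given $x\in Q\cap\Z^5$, the same argument yields $\pi(x)\in P\cap(\Z^3\times\R)=\pi(\Delta)$, so there exists $x'\in\Delta$ with $\pi(x)=\pi(x')$. Then $x-x'$ lies in $\ker\pi$, which a direct computation shows is the line spanned by $(0,0,0,1,\sqrt 2)$; hence $x-x'=\beta(0,0,0,1,\sqrt 2)$ for some $\beta\in\R$. Comparing fifth coordinates gives $\beta\sqrt 2=x_5-x'_5\in\Z$, and the irrationality of $\sqrt 2$ forces $\beta=0$, so $x=x'\in\Delta$. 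This irrationality step is the only non-trivial ingredient of the proof, and it is precisely what makes the construction require an irrational direction in the first place.

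For the facet count, since $\pi$ is a surjective linear map, the facets of $Q=\pi^{-1}(P)$ are the $\pi$-preimages of the facets of $P$, so $Q$ and $P$ have the same number of facets. Together with $Q\cap\Z^5=\Delta$ this yields the inequality $\rc(\Delta,\Z^5)\le\rc(\pi(\Delta),\Z^3\times\R)$ claimed in the observation.
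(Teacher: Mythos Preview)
Your proof is correct and follows essentially the same approach as the paper: both identify $Q=\pi^{-1}(P)$, use $\ker\pi=\ell$, and invoke the irrationality of $\sqrt{2}$ to conclude that any integer point of $Q$ must actually lie in $\Delta$. Your version is slightly more explicit (you spell out the decomposition showing $Q=\pi^{-1}(P)$, compare fifth coordinates for the irrationality step, and justify the facet count), but the underlying argument is identical to the paper's.
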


\begin{proof}
By the choice of $\pi$,  $\pi(\Z^5)$ is a subset of $\Z^3 \times \R$. Moreover, $\ell$ is the kernel of $\pi$ and $Q$ is the pre-image  of $P$ under $\pi$.  

We show that $Q\cap\Z^5=\Delta$.
If $q\in Q\cap \Z^5$, then  $\pi(q) \in P\cap (\Z^3 \times \R) = \pi(\Delta)$. The condition $\pi(q) \in \pi(\Delta)$ implies $q\in \Delta + \ell$.  But since $\ell$ is an irrational line, its only integer point is $\mathbf{0}$. Hence the points of $\Delta$ are the only integer points in $\Delta+\ell$. As $q \in \Z^5$, we conclude that $q \in \Delta$.  Conversely, if  $q \in \Delta$, then $\pi(q) \in \pi(\Delta) \in P$. Thus, $q$ is in the pre-image of $P$ under $\pi$, that is, $q \in Q$. This shows $Q \cap \Z^5 = \Delta$ and yields the assertion. 
\end{proof}

Based on Observation~\ref{obs:proj}, for deriving $\rc(\Delta,\Z^5) \le 5$ it is sufficient to find a $(3,1)$-mixed-integer relaxation
of~$\pi(\Delta)$ with five facets.
To this end, let~$\eps \in (0,1)$ and consider the inequalities
\begin{align}
    x_1&\geq x_4, \label{constr:1}\\
    x_3&\geq x_4, \label{constr:2}\\
    \eps x_1 + x_2 + \frac{1-\eps}{1+\sqrt{2}} x_3 + \frac{(1-\eps)\sqrt{2}}{1+\sqrt{2}} x_4 &\leq 1, \label{constr:3}\\
    x_3 + \sqrt{2}x_4  &\geq 0, \label{constr:4}\\
    x_1 - (1+\eps) x_2 + x_3 - x_4 &\leq 1. \label{constr:5}
\end{align}
We use these inequalities to define the polyhedron~$P_\eps \define \{ x \in
\R^4 : x \text{ satisfies~\eqref{constr:1}--\eqref{constr:5}}\}$.
Moreover, let 
\[
	\Delta' \define \pi(\Delta)=\{\boldsymbol{0}, e_1,e_2, e_3, (1,0,1,1), (0,1,1,-\frac{1}{\sqrt{2}})\}.
\]

\newcommand\prism{
	\draw[very thick] (a0) -- (a1) ;
	\draw[very thick] (a1) -- (a2) ;
	\draw[very thick] (a2) -- (b2) ;
	\draw[very thick] (b2) -- (b1);
	\draw[very thick] (b1) -- (b0) ;
	\draw[very thick] (b0)-- (a0);
	\draw[very thick] (a1) -- (b1);
	\draw[very thick] (b0) -- (b2);
	\draw[dotted,very thick] (a2) -- (a0);
} 
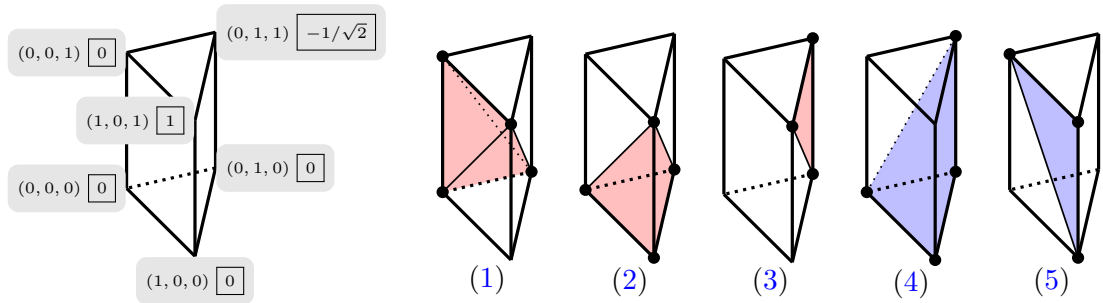
\begin{figure} 
\begin{center}
\begin{tikzpicture}[scale=0.9,baseline=2mm] 
	\tikzstyle{point} = [left,fill=black!10!white,rounded corners]
	\coordinate (a0) at (0,0) ;
	\coordinate (a1) at (1,-1) ;
	\coordinate (a2) at (1.3,0.3) ;
	\coordinate (b0) at (0,2) ;
	\coordinate (b1) at (1,1);
	\coordinate (b2) at (1.3,2.3);
	\prism 
	\node[point] at (a0) {\tiny $(0,0,0) \ \fbox{0}$ };
	\node[point,below] at (a1) {\tiny $(1,0,0) \  \fbox{0}$ };
	\node[point,right] at (a2) {\tiny $(0,1,0) \ \fbox{0}$ };
	\node[point,left] at (b0) {\tiny $(0,0,1) \ \fbox{0}$ }; 
	\node[point,left] at (b1) {\tiny  $(1,0,1) \ \fbox{1}$ }; 
	\node[point,right] at (b2) {\tiny  $(0,1,1)$  \fbox{$-1/\sqrt{2}$} }; 
\end{tikzpicture} 
\hspace{2mm}
\begin{tabular}{c}
\begin{tikzpicture}[baseline=7mm]
	\fill[fill=red!50!white,opacity=0.5] (a0) -- (a2) -- (b1) -- (b0) -- (a0) ;
	\draw (a0) -- (b1) -- (a2) ;
	\draw[dotted] (a2) -- (b0);
	\prism 
	\fill (a0) circle (0.08);
\fill (a2) circle (0.08);
\fill (b1) circle (0.08);
\fill (b0) circle (0.08);
\end{tikzpicture} 
\\ 
\eqref{constr:1}
\end{tabular} 
\begin{tabular}{c}
\begin{tikzpicture}[baseline=7mm]
	\fill[fill=red!50!white,opacity=0.5] (a0) -- (a1) -- (a2) -- (b1) -- (a0) ;
	\draw (a0) -- (b1) -- (a2) ;
	\prism 
	\fill (a0) circle (0.08);
	\fill (a1) circle (0.08);
	\fill (a2) circle (0.08);
	\fill (b1) circle (0.08);
\end{tikzpicture} 
\\
\eqref{constr:2}
\end{tabular}
\begin{tabular}{c} 
\begin{tikzpicture}[baseline=7mm]
	\fill[fill=red!50!white,opacity=0.5] (a2) -- (b1) -- (b2)  -- (a2);
	\draw (a2) -- (b1);
	\prism 
	\fill (a2) circle (0.08);
	\fill (b1) circle (0.08);
	\fill (b2) circle (0.08);
\end{tikzpicture} 
\\
\eqref{constr:3}
\end{tabular}
\begin{tabular}{c}
\begin{tikzpicture}[baseline=7mm]
	\fill[fill=blue!50!white,opacity=0.5] (a0) -- (a1) -- (a2)  -- (b2) -- (a0);
	\draw[dotted] (a0) -- (b2);
	\prism 
	\fill (a2) circle (0.08);
	\fill (a1) circle (0.08);
	\fill (b2) circle (0.08);
	\fill (a0) circle (0.08);
\end{tikzpicture} 
\\
\eqref{constr:4}
\end{tabular}
\begin{tabular}{c}
	\begin{tikzpicture}[baseline=7mm]
		\fill[fill=blue!50!white,opacity=0.5] (a1) -- (b0) -- (b1) -- (a1) ;
		\draw (a1) -- (b0);
		\prism 
		\fill (a1) circle (0.08);
		\fill (b0) circle (0.08);
		\fill (b1) circle (0.08);
	\end{tikzpicture} 
	\\
	\eqref{constr:5}
\end{tabular}
\end{center} 
\caption{The projection of  $\Delta'$ onto the first three components, with the fourth component of the elements of $\Delta'$ indicated in the boxes, (left) and the sets of elements of $\Delta'$ on which inequalities \eqref{constr:1}--\eqref{constr:5} are active (right). The colors indicate that \eqref{constr:1}--\eqref{constr:3} provide upper bounds on $x_4$, while \eqref{constr:4}--\eqref{constr:5} provide lower bounds on $x_4$. Inequalities \eqref{constr:1}, \eqref{constr:2} and~\eqref{constr:4} are facet-defining for $\conv(\Delta')$, as each of them is active on four affinely independent vertices. Inequalities \eqref{constr:3} and \eqref{constr:5} define two-dimensional faces of $\conv(\Delta')$.\label{fig:geometry:system}} 
\end{figure} 

\begin{figure} 
\begin{center}
\includegraphics[width=3cm]{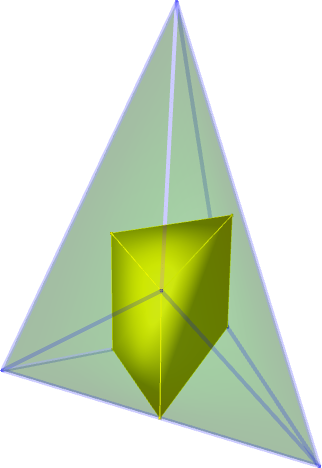}
\end{center}
\caption{The projection on the first three components of $\Delta'$ (convex hull in yellow) and the polytope~$P_{\nicefrac{1}{8}}$ (green). The  inequality description of the projection of $P_{\nicefrac{1}{8}}$ can be derived from the inequality description \eqref{constr:1}--\eqref{constr:5} of $P_{\nicefrac{1}{8}}$ using Fourier-Motzkin elimination, which  combines the three upper bounds \eqref{constr:1}--\eqref{constr:3} on $x_4$ with the two lower bounds \eqref{constr:4}--\eqref{constr:5} on $x_4$. As a result, one obtains $3 \times 2 =6$ inequalities for $x_1,x_2,x_3$, and it turns out that each of these inequalities defines a facet of the projected polytope.\label{fig:fourier:motzkin}}  
\end{figure} 

\begin{prop}\label{prop:P} The polyhedron
  $P_{\nicefrac{1}{8}}$ satisfies $P_{\nicefrac{1}{8}} \cap (\Z^3 \times \R) = \Delta'$.
\end{prop}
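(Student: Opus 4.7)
The plan is to prove the two inclusions separately. The inclusion $\Delta' \subseteq P_{1/8}$ is a direct verification: for each of the six points in $\Delta'$, plug into each of the five inequalities \eqref{constr:1}--\eqref{constr:5} and confirm feasibility. The pattern of tight inequalities (sketched in Figure~\ref{fig:geometry:system}) guides this check and tells us, for each projection point, which pair of bounds on $x_4$ will pin down the correct fourth coordinate.

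For the reverse inclusion $P_{1/8} \cap (\Z^3 \times \R) \subseteq \Delta'$, the approach is to separate the roles of $x_4$ and $(x_1,x_2,x_3)$ by Fourier--Motzkin elimination. Inequalities \eqref{constr:1}--\eqref{constr:3} supply three upper bounds on $x_4$ and \eqref{constr:4}--\eqref{constr:5} supply two lower bounds, so combining an upper with a lower bound produces six inequalities in $(x_1,x_2,x_3)$ alone, which cut out the projection of $P_{1/8}$ (as illustrated in Figure~\ref{fig:fourier:motzkin}). Several of these combinations simplify dramatically: (2)+(4) yields $x_3 \ge 0$; (3)+(4) yields $\eps x_1 + x_2 \le 1$; (1)+(5) yields $x_3 \le 1 + (1+\eps)x_2$; (2)+(5) yields $x_1 \le 1 + (1+\eps)x_2$; and (1)+(4) yields $\sqrt{2}\,x_1 + x_3 \ge 0$. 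Chaining $x_3 \ge 0$ with $x_3 \le 1 + (1+\eps)x_2$ gives $x_2 \ge -1/(1+\eps) > -1$, so the integrality of $x_2$ forces $x_2 \ge 0$.

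I then case-split on $x_2$. For $x_2=0$, the remaining bounds force $x_1,x_3 \in \{0,1\}$, yielding exactly the four projections of $\mathbf{0},e_1,e_3,(1,0,1,1)$. For $x_2=1$, the bounds give $x_1 \in \{-1,0\}$ and $x_3 \in \{0,1,2\}$; the candidates $(-1,1,0)$ and $(-1,1,1)$ are killed by $\sqrt{2}\,x_1 + x_3 \ge 0$, while $(-1,1,2)$ and $(0,1,2)$ are killed by the sixth (messier) inequality (3)+(5), leaving only the projections of $e_2$ and $(0,1,1,-1/\sqrt{2})$. For $x_2 \ge 2$, $\eps x_1 + x_2 \le 1$ forces $x_1$ very negative, then $\sqrt{2}\,x_1 + x_3 \ge 0$ forces $x_3$ very large, and these sizes are incompatible with (3)+(5); a single estimate eliminates the whole tail. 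Once $(x_1,x_2,x_3)$ is restricted to the six projection points, reading off the coinciding upper and lower bounds on $x_4$ at each point (as in the forward direction) determines $x_4$ uniquely and matches the value in $\Delta'$.

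The main obstacle I expect is the $x_2=1$ case: this is where the combined inequality (3)+(5) becomes indispensable and where the specific value $\eps=1/8$ is calibrated so that the irrational coefficients of (3) just barely defeat the lattice points $(-1,1,2)$ and $(0,1,2)$ while still admitting $(0,1,1)$ with the required (irrational) value $x_4=-1/\sqrt{2}$. The constant-term bookkeeping in (3)+(5) is the only non-routine arithmetic in the argument, and all other cases reduce to elementary inequalities.
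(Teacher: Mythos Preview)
Your proposal is correct and follows essentially the same approach as the paper: eliminate $x_4$ by Fourier--Motzkin (combining the three upper bounds \eqref{constr:1}--\eqref{constr:3} with the two lower bounds \eqref{constr:4}--\eqref{constr:5}), identify the six integer points in the resulting three-dimensional projection, and then check that at each of these points the best upper and lower bounds on $x_4$ coincide. The paper merely states that this computation is straightforward and records the outcome $Q\cap\Z^3=\{(0,0),(1,0),(0,1)\}\times\{0,1\}$, whereas you actually carry it out via a case split on $x_2$; but the strategy is identical. One small simplification: for the tail $x_2\ge 2$ you do not need the messy inequality (3)+(5) --- the bound $x_3\le 1+(1+\eps)x_2$ from (1)+(5) already contradicts $x_3\ge -\sqrt{2}\,x_1\ge 8\sqrt{2}(x_2-1)$.
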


\begin{proof}
    Consider the coordinate projection map $\phi(x_1,x_2,x_3,x_4) \define
    (x_1,x_2,x_3)$. The inequality description of the polytope $Q\define\pi
    (P_{\nicefrac{1}{8}})$ can be determined from the inequality
    description of  $P_{\nicefrac{1}{8}}$ via Fourier-Motzkin elimination, see, e.g., the book~\cite{Schrijver1978}. Since~$Q$ is a polytope (and so a bounded set), it is straightforward to check that  $Q \cap \Z^3 = \{(0,0),(1,0),(0,1)\} \times \{0,1\} =  \phi(\Delta')$. Once this verification is done, by plugging in an arbitrary $(x_1,x_2,x_3) \in Q \cap \Z^3$ into the system  \eqref{constr:1}--\eqref{constr:5} one obtains a system of linear constraints for $x_4$. These constraints provide lower and upper bounds on $x_4$. It turns out that for each choice of  $(x_1,x_2,x_3) \in Q \cap \Z^3$, the respective best lower and upper bound on $x_4$ match and thus give exactly one choice of  $x_4$ corresponding to $(x_1,x_2,x_3,x_4) \in \Delta'$. That is, for $(x_1,x_2,x_3) \in \{(0,0,0),e_1,e_2,e_3\}$, one obtains inequalities $x_4 \le 0$ and $x_4 \ge 0$, for $(x_1,x_2,x_3) = (1,0,1)$, one obtains $x_4 \le 1$ and $x_4 \ge 1$, while for $(x_1,x_2,x_3) = (0,1,1)$, one obtains $x_4 \le - \frac{1}{\sqrt{2}}$ and $x_4 \ge - \frac{1}{\sqrt{2}}$. This shows the asserted equality $P_{\nicefrac{1}{8}} \cap (\Z^3 \times \R) = \Delta'$.
  
The computational details of this proof are comprised of the Fourier-Motzkin elimination, the verification of the equality $Q \cap \Z^3 = \{(0,0),(1,0),(0,1) \} \times \{0,1\}$ and the determination of  $\setcond{ x_4 \in \R}{ (x_1,x_2,x_3,x_4) \in  P_{\nicefrac{1}{8}}}$ for each choice of $(x_1,x_2,x_3) \in Q \cap \Z^3$. All these computations can be carried out in a straightforward manner and are therefore omitted. See  also  Figs.~\ref{fig:geometry:system} and \ref{fig:fourier:motzkin} that highlight the geometry behind these computations. 
\end{proof}

We observe that the sequence of the values $\rc(\Delta_d)$ is monotonically non-decreasing. 
\begin{prop} \label{prop:mon:rc:delta}
	$\rc(\Delta_d) \le \rc(\Delta_{d+1})$. 
\end{prop}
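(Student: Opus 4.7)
The plan is to exhibit a relaxation of $\Delta_d$ whose facet count does not exceed that of an optimal relaxation of $\Delta_{d+1}$, by slicing with a coordinate hyperplane. The key observation is that $\Delta_{d+1} \cap \{x \in \R^{d+1} : x_{d+1} = 0\} = \{\mathbf{0}, e_1, \ldots, e_d\}$, which is precisely $\Delta_d$ after identifying the hyperplane $H \define \{x \in \R^{d+1} : x_{d+1} = 0\}$ with $\R^d$ via the coordinate map $\phi(x_1,\ldots,x_{d+1}) \define (x_1,\ldots,x_d)$. Note that this identification sends $H \cap \Z^{d+1}$ bijectively onto $\Z^d$.

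Start with a polyhedron $Q \subseteq \R^{d+1}$ that is a relaxation of $\Delta_{d+1}$ with $k \define \rc(\Delta_{d+1})$ facets, say $Q = \{x \in \R^{d+1} : a_i^\top x \le b_i,\, i \in [k]\}$. Define $Q' \define \phi(Q \cap H) \subseteq \R^d$. First I would check that $Q'$ is a relaxation of $\Delta_d$: by construction,
\[
Q' \cap \Z^d \;=\; \phi\bigl(Q \cap H \cap \Z^{d+1}\bigr) \;=\; \phi\bigl(\Delta_{d+1} \cap H\bigr) \;=\; \{\mathbf{0}, e_1, \ldots, e_d\} \;=\; \Delta_d.
\]

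Second, I would argue that $Q'$ has at most $k$ facets. Substituting $x_{d+1}=0$ into each of the $k$ defining inequalities of $Q$ yields an inequality description of $Q'$ by at most $k$ linear inequalities in $\R^d$, so the facet count can only drop (some inequalities may become redundant or cut out lower-dimensional faces). Hence $\rc(\Delta_d) \le k = \rc(\Delta_{d+1})$.

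I do not anticipate a serious obstacle: the only point that requires a moment of care is the non-increase of facets under intersection with a hyperplane, but this is an immediate consequence of taking the restricted inequality description. No irrationality issues arise because we are restricting, not projecting along an irrational direction, so the relaxation of $\Delta_d$ produced in this way inherits whatever (ir)rational data $Q$ had, which is not something we need to track.
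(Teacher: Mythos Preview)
Your proof is correct and follows exactly the same approach as the paper: restrict an optimal relaxation of $\Delta_{d+1}$ to the coordinate hyperplane $\R^d \times \{0\}$ to obtain a relaxation of $\Delta_d$ with no more facets. The paper's version is simply a one-sentence summary of what you have written out in detail.
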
 
\begin{proof}
	A relaxation of $\Delta_{d+1}$ with $k$ inequalities restricted to the coordinate subspace $\R^d \times \{0\}$ gives a relaxation of  $\Delta_d$ with at most $k$ inequalities. This shows the desired assertion. 
\end{proof}

Our use of  Proposition~\ref{prop:mon:rc:delta} is two-fold. We need it as an ingredient in the proof of Theorem~\ref{thm:dim5}, on the one hand. 
On the other hand, since $\rc(\Delta_d)$ grows with respect to $d$, we are justified to study the asymptotic growth of this quantity. 

We are ready to prove Theorem~\ref{thm:dim5},

\begin{proof}[Proof of Theorem~\ref{thm:dim5}]
   By Proposition~\ref{prop:P} and Observation \ref{obs:proj},
  $(P_{\nicefrac{1}{8}}\times\{0\})+\ell$ is a relaxation of $\Delta$.
  Since~$P_{\nicefrac{1}{8}}$ is defined by five inequalities, we have $\rc(\Delta) \leq 5$. On the other hand, by Proposition~\ref{prop:mon:rc:delta} we have  $\rc(\Delta)  = \rc(\Delta_5) \ge \rc(\Delta_4)  = 5$, where $\rc(\Delta_4)=5$ has been verified in  \cite{averkov2021complexity}.
\end{proof}

\begin{remark}[Computer-assisted studies] 
	The system \eqref{constr:1}-\eqref{constr:5} has been discovered and validated using a computer.  For the separation of  $\Delta'$ from $\Z^4$ with five inequalities, a mixed-integer programming model has been established, akin to the model suggested in  \cite{averkov2021complexity}.  
	In the established model,  instead of $\Z^4$ a finite sufficiently large subset of $\Z^4$ has been used as an ``approximation'' of $\Z^4$. The model was represented and solved in the floating-point arithmetic. The values that have been produced as a result have been inspected and manually converted to exact algebraic numbers in the field $\Q[\sqrt{2}]\define \setcond{ a +  b\sqrt{2} }{a,b \in \Q}$. 
	
	As a post-processing step, for the validation of the system, along with the manual verification, we also used the computer algebra system SageMath \cite{sagemath}. Notably, SageMath can carry out exact polyhedral computations over the field of algebraic numbers. So, in SageMath it is possible to carry out all of the computational steps described in the above proof by working with polyhedra over the algebraic numbers. 
	In particular,  we used SageMath to determine an appropriate choice of $\epsilon>0$ in \eqref{constr:1}-\eqref{constr:5}. 
\end{remark} 

\subsection{Extensions of the Five-Dimensional Counterexample}

To prove Corollary~\ref{cor:dimd}, we show that $\rc(\Delta_d)$ is sub-additive in the cardinality $d+1$  of $\Delta_d$. To this end, we introduce the free join of non-empty finite sets $X \subset\Z^k$ and $Y \subset\Z^\ell$ and study its properties. For $k, \ell \in \Z_{>0}$, we define the \emph{free join} by 
\[
	X \ast Y \define \setcond{(x,\mathbf{0},0)}{x \in X} \cup \setcond{(\mathbf{0},y,1)}{y \in Y} \subset \Z^k \times \Z^\ell \times \Z.
\]
This operation corresponds to the free join of polytopes, as $\conv(X \ast Y)$ is the free join of the polytopes $\conv(X)$ and $\conv(Y)$, as defined in \cite[15.1.3]{henk200416}. It is also useful  to define the free join in the case when $k$ or $\ell$ is zero, which means that one takes the free join with $\Delta_0 \define \{\mathbf{0}\} \subseteq \R^0$. This is naturally defined as $X \ast \Delta_0 \define (X \times \{0\}) \cup \{e_{k+1}\} \subset \Z^k \times \Z$ and $\Delta_0 \ast Y \define \{\mathbf{0}\} \cup (Y \times \{1\}) \subset \Z \times \Z^\ell$. The analogy for  passing from $X$ to $X \ast \Delta_0$ within the theory of polyhedra is building a pyramid over a given polytope. 
Note that $(X+u) \ast (Y+v)$ coincides with $X \ast Y$ up to a unimodular transformation for all $u \in \Z^k$ and $v \in \Z^\ell$. Indeed, $(X+u) \ast (Y+v)$ is the image of $X \ast Y$ under the affine unimodular map $\phi(x,y,z) \define (x + (1-z) u ,y+ z v,z)$. 

In order to handle special cases uniformly, it will we also be convenient to fix $X \ast \emptyset \define X$, $\emptyset \ast Y \define Y$, $\rc(\Delta_0)\define1$, $\Delta_{-1} \define \emptyset$ and $\rc(\Delta_{-1})\define0$. 
\begin{lemma}
  \label{lem:subadditive}
  Let $k, \ell \in \Z_{\ge 0}$ with $k>0$ or $\ell>0$.  Then, for non-empty finite sets $X \subset \Z^k$ and $Y \subset \Z^\ell$ with $\dim(X)=k$ and $\dim(Y)=\ell$, one has 
  $\rc(X \ast Y ) \leq \rc(X) + \rc(Y)$.
\end{lemma}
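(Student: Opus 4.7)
The plan is to construct a relaxation of $X \ast Y$ in $\R^{k+\ell+1}$ from given minimal relaxations of $X$ and $Y$ by a ``biprism'' construction in which the new coordinate $z$ serves as a blending parameter between the two sets. Because an integer translation of $X$ or $Y$ induces a unimodular transformation on $X \ast Y$, I may assume $\mathbf{0}\in X$ and $\mathbf{0}\in Y$. Fix $P=\{x\in\R^k:Ax\leq b\}$ with $p=\rc(X)$ rows and $Q=\{y\in\R^\ell:Cy\leq d\}$ with $q=\rc(Y)$ rows so that $P\cap\Z^k=X$ and $Q\cap\Z^\ell=Y$. Since $\mathbf{0}\in P\cap Q$, we have $b\geq\mathbf{0}$ and $d\geq\mathbf{0}$, and since $X,Y$ are finite, $\rec(P)\cap\Z^k=\{\mathbf{0}\}$ and $\rec(Q)\cap\Z^\ell=\{\mathbf{0}\}$.

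In the principal case $k,\ell\geq 1$, I would consider the polyhedron
\[
R=\left\{(x,y,z)\in\R^k\times\R^\ell\times\R : Ax\leq (1-z)b,\ Cy\leq z d\right\},
\]
which is cut out by $p+q$ linear inequalities, and show $R\cap\Z^{k+\ell+1}=X\ast Y$. The inclusion $\supseteq$ is direct: substituting $z=0$ gives $X\times\{\mathbf{0}\}\times\{0\}\subseteq R$ and $z=1$ gives $\{\mathbf{0}\}\times Y\times\{1\}\subseteq R$. For $\subseteq$, I would take an integer point $(x,y,z)\in R$ and split on the integer value of $z$: $z=0$ forces $x\in P\cap\Z^k=X$ and $y\in\rec(Q)\cap\Z^\ell=\{\mathbf{0}\}$, while $z=1$ is symmetric. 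For $z\geq 2$, combining $Ax\leq(1-z)b$ with $Ax_0\leq b$ for any $x_0\in X$ yields $A(x+x_0)\leq(2-z)b\leq\mathbf{0}$, so $x+x_0\in\rec(P)\cap\Z^k=\{\mathbf{0}\}$. Thus $x_0=-x$ must hold for \emph{every} $x_0\in X$, forcing $|X|=1$ and contradicting $\dim(X)=k\geq 1$. The case $z\leq -1$ is symmetric and uses $\dim(Y)=\ell\geq 1$.

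The boundary cases $k=0$ and $\ell=0$ (where $X=\Delta_0$ or $Y=\Delta_0$ and $\rc(\Delta_0)=1$) lose the symmetric factor needed to rule out one of $z\geq 2$ or $z\leq -1$. I would handle them by adding a single explicit inequality --- $z\leq 1$ when $k=0$, or $z\geq 0$ when $\ell=0$ --- which exactly consumes the extra unit of budget provided by $\rc(\Delta_0)=1$, leaving the other side to be ruled out by the argument above.

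The main technical subtlety is ruling out integer points at $z\notin\{0,1\}$. Naively, for $z$ a large positive integer the constraint $Ax\leq(1-z)b$ cuts far below $P$, and for $z$ large negative it dilates $P$ widely, so one might fear the appearance of spurious integer points; moreover $P$ and $Q$ may be unbounded and even irrational, which makes the dilates geometrically delicate. Their absence hinges on two facts working in tandem: the recession cone of any relaxation of a finite set contains no non-zero integer vectors, and the dimension hypothesis $\dim(X)=k$, $\dim(Y)=\ell$ forces $|X|,|Y|\geq 2$ in the non-degenerate regime.
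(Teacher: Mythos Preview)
Your proposal is correct and follows essentially the same approach as the paper: the same ``biprism'' polyhedron $R=\{(x,y,z):Ax\le(1-z)b,\ Cy\le zd\}$, the same case split on the integer value of $z$, and the same treatment of the degenerate cases $k=0$ or $\ell=0$ via one extra inequality. The only minor difference is in how you rule out $z\geq 2$ and $z\leq -1$: the paper observes that $b$ and $d$ each have a strictly positive entry (since otherwise $P$ or $Q$ would be a full-dimensional cone), so for $z\leq -1$ the set $\{y:Cy\leq zd\}$ is contained in $\rec(Q)$ but excludes the origin, hence contains no integer point; you instead add $Ax\leq(1-z)b$ to $Ax_0\leq b$ and conclude $x_0=-x$ for all $x_0\in X$, forcing $|X|=1$. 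Both arguments exploit the full-dimensionality hypothesis and are equally valid.
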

\begin{proof}
  We may assume w.l.o.g.\ that both~$X$ and~$Y$ admit a relaxation, i.e.,
  $\rc(X)$ and~$\rc(Y)$ are finite, as otherwise the statement holds trivially.
  We first consider the case~$k> 0$, $\ell>0$. 
  Without loss of generality we may assume that both $X$ and $Y$ contain the null vector. 
  Let~$Ax \leq b$ and~$Cy \leq d$ be inequality systems defining
  relaxations~$P$ and~$Q$ of~$X$ and~$Y$, respectively, having $\rc(X)$ and $\rc(Y)$ inequalities, respectively. 
  Since both~$X$ and~$Y$ contain the origin, $b$ and~$d$
  are non-negative vectors.
  In particular, there needs to be at least one strictly positive entry
  in~$b$ and~$d$, as otherwise, $P$ or $Q$ would be  a full-dimensional cone and would contain infinitely many integer points. 

 To verify the inequality, it suffices to  show that the polyhedron
  \[
    R = \{ (x,y,z) \in \R^k \times \R^\ell \times \R :
    Ax \leq b\cdot (1 - z),\; Cy \leq d\cdot z\},
  \]
  defined by $\rc(X) + \rc(Y)$ inequalities, 
  is a relaxation of~$X \ast Y$.

  Let~$(x,y,z) \in R \cap \Z^{k+\ell+1}$.
  If~$z = 0$, then~$y$ is contained in the recession cone~$\rec(Q)$ of~$Q$.
  Since~$Q$ is a relaxation of~$Y$, we conclude~$y = \mathbf{0}$
  as otherwise there are infinitely many integer points in~$Q$.
  Moreover, setting~$z=0$ gives the original system~$Ax \leq b$, i.e.,
  $x \in X$ follows.
  Analogously, if~$z=1$, then~$x \in \rec(P)$ and~$y \in Y$.
  Thus, $(x,y,z) \in X \ast Y$ follows in both cases.

  If~$z \leq -1$, then $\{y\in\R^\ell : Cy \leq d\cdot z\}$ is a subset of $\rec(Q)$, since
  decreasing the right-hand side makes the system more restrictive as~$d$
  is non-negative.
  Since there exists at least one entry of~$d$ that is positive, $Cy \leq
  d\cdot z$ excludes the null vector if~$z < 0$.
  Hence, $\{y\in\R^\ell : Cy \leq d\cdot z\}$ does not contain any integer point.
  Since we can argue analogously for~$z \geq 2$ and the system $Ax \leq b\cdot(1-z)$,
  we conclude~$R \cap \Z^{k+\ell+1} \subseteq X \ast Y$. The reverse inclusion $X \ast Y \subseteq R \cap \Z^{k+\ell+1}$ is straightforward. 
  
  When $k$ or $\ell$ is zero, the argument is similar. Without loss of generality, let $\ell=0$. Then $Y = \Delta_0$ and  $X \ast Y = (X \times \{0\}) \cup \{e_{k+1}\}$. Again, we can assume that $\mathbf{0} \in X$ and fix a system $Ax \le b$ of $\rc(X)$ inequalities that defines a relaxation of $X$. One can see that the system $Ax \le b \cdot (1-z)$, $z \ge 0$ of $\rc(X)+1 = \rc(X)+\rc(Y)$ inequalities  defines a relaxation of  $X \ast Y$ following the proof strategy we used in the case $k>0,\ell>0$. 
\end{proof}
Using Lemma~\ref{lem:subadditive}, Corollary~\ref{cor:dimd} follows easily.
\begin{proof}[Proof of Corollary~\ref{cor:dimd}]
	  Note that~$\rc(\Delta_{k+\ell+1}) = \rc(\Delta_k \ast \Delta_\ell)$
	because~$\Delta_{k+\ell+1}$ and~$\Delta_k \ast \Delta_\ell$ are
	unimodularly equivalent. 

	Let $m$ and $r$ be the quotient and the remainder of the division of $d+1$ by $6$. By the above observation, $\Delta_d$ is equivalent to the free join of $m$ copies of $\Delta_5$   and one copy of $\Delta_{r-1}$ for $r>0$. Thus, using Lemma~\ref{lem:subadditive} and Theorem \ref{thm:dim5}, we obtain $\rc(\Delta_d) \le m \rc(\Delta_5) + \rc(\Delta_{r-1}) \le 5 m + r$.    
\end{proof}

\begin{remark}
	Combining Proposition~\ref{prop:mon:rc:delta} and the inequality $\rc(\Delta_{d+1}) \le \rc(\Delta_d \ast \Delta_0) \le \rc(\Delta_d) + 1$, which follows from Lemma~\ref{lem:subadditive}, we conclude that, for any given $d \in \Z_{>0}$, the value  $\rc(\Delta_{d+1})$ either stays equal to $\rc(\Delta_d)$ or grows with respect to $\rc(\Delta_d)$ by exactly one unit. Thus, for the determination of the exact behavior of $\rc(\Delta_d)$ in $d$, the frequency of the jumps by one unit would have to be estimated from below and above. 
\end{remark}

As a consequence of our proof approach for Theorem~\ref{thm:dim5}, we also obtain the following. 

\begin{cor}
	For every dimension $d \in \Z_{\ge 5}$, up to identification with respect to unimodular transformations, there exist infinitely many sets $X \subset \Z^d$ with $\dim(X)=d$ and~$\rc(X) < d+1$. 
\end{cor}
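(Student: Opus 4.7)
The plan is to embed the example from Theorem~\ref{thm:dim5} into an infinite family of pairwise non-equivalent $5$-dimensional lattice sets, all admitting a $5$-facet relaxation, and then propagate this family to every dimension $d \ge 5$ via free joins with a standard simplex.

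For the base case, I would define
\[
  X_n \define \{\mathbf{0}, e_1, e_2, e_3, (1,0,1,n,0), (0,1,1,0,n)\} \subset \Z^5, \qquad n \in \Z_{>0},
\]
so that $X_1 = \Delta$ is the counterexample from Section~\ref{sec:counterex}. A direct $5 \times 5$ determinant computation shows that the edge matrix of $\conv(X_n)$ based at $\mathbf{0}$ has absolute determinant $n^2$; in particular $\dim(X_n) = 5$ and the normalized lattice volume of $\conv(X_n)$ equals $n^2$. Being a unimodular invariant, this separates the $X_n$ into distinct equivalence classes. To bound $\rc(X_n) \le 5$, I would show that the polyhedron $P^{(n)}$ obtained from $P_{1/8}$ by substituting $x_4 \mapsto x_4/n$ in~\eqref{constr:1}--\eqref{constr:5} satisfies $P^{(n)} \cap (\Z^3 \times \R) = \pi(X_n)$. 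The change of variables $y = x_4/n$ turns this statement into Proposition~\ref{prop:P} itself, because the substitution only rescales the real coordinate. Since the proof of Observation~\ref{obs:proj} uses nothing about $\Delta$ beyond the fact that $\ell$ contains no nonzero integer point, it applies verbatim with $X_n$ in place of $\Delta$ to give $\rc(X_n) \le 5 < 6$.

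For each $d \ge 6$ I would then set $X^{(d)}_n \define X_n \ast \Delta_{d-6}$, a set of dimension $5 + (d-6) + 1 = d$. Lemma~\ref{lem:subadditive} combined with the elementary bound $\rc(\Delta_{d-6}) \le d - 5$ yields $\rc(X^{(d)}_n) \le 5 + (d-5) = d < d+1$. The edge matrix of $\conv(X^{(d)}_n)$ is block upper-triangular, with diagonal blocks coming from $X_n$ and $\Delta_{d-6}$, so its absolute determinant is again $n^2$; thus the normalized lattice volume distinguishes the sets $X^{(d)}_n$ across different $n$, and together with the case $d=5$ (where $X^{(5)}_n = X_n$) this exhibits the required infinite family in every dimension $d \ge 5$.

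The only non-routine point in this plan is the verification that the scaled inequality system still carves out exactly $\pi(X_n)$ from $\Z^3 \times \R$, but I do not expect a genuine obstacle here: only the real coordinate is rescaled and the irrational line $\ell$ is independent of $n$, so the lattice-point analysis from the proof of Proposition~\ref{prop:P} transfers through the substitution $y = x_4/n$ without any new Fourier--Motzkin computation.
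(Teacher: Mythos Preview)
Your proof is correct and follows essentially the same strategy as the paper: build an infinite one-parameter family of $5$-dimensional examples from the basic counterexample $\Delta$, distinguish them by lattice volume, and then free-join with $\Delta_{d-6}$ to reach higher dimensions via Lemma~\ref{lem:subadditive}. The only difference is in the parametrization of the $5$-dimensional family: the paper modifies a single coordinate, taking $X_a=\{\mathbf{0},e_1,e_2,e_3,(1,0,1,1,0),(0,1,1,0,a)\}$, and compensates by changing the projection line to $\ell_a$ spanned by $(0,0,0,1,a\sqrt{2})$ so that $\pi_a(X_a)=\Delta'$ and the \emph{same} relaxation $P_{1/8}$ can be reused verbatim; you instead scale two coordinates, keep the original line $\ell$ and projection $\pi$, and rescale the relaxation in the real coordinate. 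Both are valid and equally short; your variant has the minor convenience that $\ell$ stays fixed, while the paper's variant avoids touching the inequality system at all.
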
 
\begin{proof}
	We first settle the case $d=5$. For each $a \in \Z_{>0},$ we consider the one-parametric generalization  	\[
	X_a \define \{\mathbf{0},e_1,e_2,e_3, (1,0,1,1,0), (0,1,1,0,a) \}
	\]
	of the set $\Delta$, with $\Delta=X_a$  for $a=1$. The projection of $X_a$ along the irrational line $\ell_a$ spanned by  $(0,0,0, 1, a \sqrt{2})$ under the map $\pi_a\colon \R^5 \to \R^4$,
 	\[
		\pi_a(x_1,x_2,x_3,x_4,x_5) = (x_1,x_2,x_3,x_4 - \frac{1}{a \sqrt{2}} x_5). 
	\]	
	is the same set $\Delta'$ as the projection of $\Delta$ under the projection map $\pi$ considered in the analysis of $\Delta$. It is clear that $\pi_a$ maps $\Z^5$ into $\Z^3 \times \R$. 
	Consequently, in the same way as we showed that $\pi^{-1}(P_{\nicefrac{1}{8}})$ is a five-facet relaxation of $\Delta$ within $\Z^5$ relying on Observation~\ref{obs:proj} and Proposition~\ref{prop:P}, 
		we can also show that $\pi_a^{-1}(P_{\nicefrac{1}{8}})$  is a five-facet relaxation of~$X_a$ within~$\Z^5$ relying on a straightforward generalization of Observation~\ref{obs:proj} (with $\Delta$ and~$\pi$ replaced by~$X_a$ and~$\pi_a$, respectively) and Proposition~\ref{prop:P}. 
		
	Distinct sets $X_{a'}$ and $X_{a''}$ with $a',a'' \in \Z_{>0}$ and $a' \ne a''$ do not coincide up to unimodular transformations when $a' \ne a''$, since $X_a$ is the vertex set of a simplex of volume $\frac{1}{5!} a$ and unimodular transformations preserve volume. This completes the proof of the assertion for $d=5$. 
	
 	For higher dimensions $d \ge 6$, we use 
 	\(
 			Y_a \define X_a \ast \Delta_{d-6}.
 	\)
 	The set $Y_a$ is the vertex set of a simplex of volume $\frac{1}{d!} a$. Thus, all of the sets $Y_a$ with $a \in \Z_{>0}$ are pairwise distinct up to unimodular transformations. Lemma~\ref{lem:subadditive} yields $\rc(Y_a) \le \rc(X_a) + \rc(\Delta_{d-6}) \le 5 + d-5 = d < d+1$. 
\end{proof}

\subsection{The Core Steps to Find the Counterexample}

The core steps to find our counterexample consist of a projection of
(a unimodular transformation of) $\Delta_5$ to a lower
dimensional space and  a determination of  a $(3,1)$-mixed-integer relaxation of the
projection.
As we will see below, the continuous coordinate of such a relaxation can be
interpreted as a lifting from a 3-dimensional space.
For this reason, to be able to generalize our proof strategy to
prove Theorem~\ref{thm:main}, we provide two lemmas that explain how the
relaxation complexity behaves under these two operations: projection and
lifting.
Afterwards, we will review our approach to prove Theorem~\ref{thm:dim5} in
the light of these two lemmas, which will motivate our strategy to prove
Theorem~\ref{thm:main}.

\begin{lemma}[On injective projections]\label{lem:inj:proj}
  Let $X \subseteq Z \subseteq \R^d$ and let  $\pi\colon \R^d \to \R^n$ be
  an affine map such that $\pi|_Z$ is injective.
  Then,
  \[
    \rc(X,Z) \le \rc(\pi(X),\pi(Z)).
  \] 
\end{lemma}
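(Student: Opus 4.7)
The plan is to construct a relaxation of $X$ within $Z$ directly as the pre-image of a relaxation of $\pi(X)$ within $\pi(Z)$, and argue that this construction preserves the facet count while the injectivity hypothesis guarantees the correct integer (i.e., within-$Z$) behaviour.

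Concretely, I may assume $\rc(\pi(X),\pi(Z))=k<\infty$, since otherwise the statement is vacuous. Pick a polyhedron $P\subseteq\R^n$ with $k$ facets such that $P\cap\pi(Z)=\pi(X)$, and write its inequality description as $a_i^\top y\le b_i$ for $i\in[k]$. Write the affine map as $\pi(x)=Ax+c$ and set
\[
Q \;\define\; \pi^{-1}(P) \;=\; \bigl\{\, x\in\R^d \,:\, (A^\top a_i)^\top x \le b_i - a_i^\top c,\ i\in[k]\,\bigr\}.
\]
Since $Q$ is defined by $k$ linear inequalities, it has at most $k$ facets, so it suffices to verify $Q\cap Z=X$.

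For the inclusion $X\subseteq Q\cap Z$, note $X\subseteq Z$ by hypothesis, and for $x\in X$ we have $\pi(x)\in\pi(X)\subseteq P$, hence $x\in Q$. For the converse inclusion, take any $z\in Q\cap Z$; then $\pi(z)\in P$ and also $\pi(z)\in\pi(Z)$, so $\pi(z)\in P\cap\pi(Z)=\pi(X)$. Thus there exists $x\in X\subseteq Z$ with $\pi(x)=\pi(z)$, and the injectivity of $\pi|_Z$ forces $z=x\in X$. This yields $Q\cap Z=X$ and therefore $\rc(X,Z)\le k=\rc(\pi(X),\pi(Z))$.

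There is no real obstacle here; the only point that deserves a line of justification is that pulling back an inequality description of $P$ through the affine map $\pi$ produces an inequality description of $\pi^{-1}(P)$ of the same size (redundancies can only decrease the facet count), and that the injectivity of $\pi|_Z$ — used only on the set $Z$, not globally on $\R^d$ — is precisely what is needed to rule out ``spurious'' points $z\in Z\setminus X$ that project into $\pi(X)$. All other steps are routine set-theoretic manipulations.
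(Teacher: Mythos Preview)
Your proof is correct and follows the same route as the paper: take a minimal relaxation $P$ of $\pi(X)$ within $\pi(Z)$, pull it back to $Q=\pi^{-1}(P)$, observe this has at most $k$ facets, and verify $Q\cap Z=X$ using the injectivity of $\pi|_Z$. The only cosmetic difference is that you spell out the inequality description of the pre-image explicitly, whereas the paper simply asserts that $\pi^{-1}(P)$ has at most $k$ facets.
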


\begin{proof}
  Let $k = \rc(\pi(X),\pi(Z))$.
  We may assume that $k$ is finite, as
  otherwise the assertion is trivial.
  Consider a polyhedron $Q$ with $k$ facets, which is a relaxation of
  $\pi(X)$ within~$\pi(Z)$, which means that $Q \cap \pi(Z) = \pi(X)$
  holds.
  It is clear that the polyhedron $P = \pi^{-1}(Q)$ has at most $k$
  facets.	
  For verifying the asserted inequality, it suffices to show that $P$ is a
  relaxation of $X$ within $Z$, which means that $P \cap Z = X$.
  
  To check the inclusion $P \cap Z \subseteq X$, we consider an arbitrary
  $y \in P \cap Z$.
  From the definition of $P$, we see that $\pi(y) \in Q \cap \pi(Z)$.
  In view of $Q \cap \pi(Z) = \pi(X)$, we obtain~$\pi(y) \in\pi(X)$.
  As $\pi|_Z$ is injective and $X \subseteq Z$, we conclude that $y \in X$.
	
  For showing the converse inclusion $X \subseteq P \cap Z$, we consider an
  arbitrary  $y \in X$.
  Since~$X \subseteq Z$, we have $y \in Z$.
  The condition $y \in P$, which remains to be verified, is equivalent to
  $\pi(y) \in Q$.
  Since $Q \cap \pi(Z) = \pi(X)$ and $\pi(y) \in \pi(X)$, we see that
  $\pi(y)$ is indeed in~$Q$.
\end{proof} 

Next, we show how we can derive upper bounds on the~$(k,1)$-mixed-integer
relaxation complexity of finite sets by means of lifting.
To formalize this, we introduce, for $h\colon X \to \R$,
\begin{align*}
	\lift_h(X) & \define \setcond{ (x,h(x)) }{ x \in X },
	\\ \clift_h(X) & \define \conv(\lift_h(X)),
\end{align*}
and call these sets the \emph{lift} (resp.\ \emph{convex lift}) of $X$ with
respect to the \emph{height function} $h$.

Consider a $(k+1)$-dimensional polytope $P$ in $\R^{k+1}$.
The facet-defining inequalities for this polytope are inequalities in
$(x,y) \in \R^k \times \R$, which can be brought into one of the three
forms $y \le u^\top x + v $, $y \ge u^\top x +v $ or $0 \le u^\top x + v$, where $u \in \R^k$ and $v \in \R$. 
We call the respective facets of $Q$ the \emph{upper facets}, the \emph{lower
  facets}, and the \emph{lateral facets}.

\begin{prop}\label{prop:relaxation}
    Let $P\subset \R^{k+1}$ be a full-dimensional polytope, $h\colon T\to
    \R$  be a function from a finite non-empty set $T\subset \Z^k$.
    Then $P$ is a relaxation of $\lift_h(T)$ within $\Z^k\times \R$ if and
    only if
    \begin{enumerate}
    \item\label{it:prop1} every $p\in \lift_h(T)$ is contained in an upper
      and lower facet of $P$, and
    \item\label{it:prop2} the projection of $P$ onto the first $k$
      components is a relaxation of $T$ within $\Z^k$.
    \end{enumerate}
\end{prop}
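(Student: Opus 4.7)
The plan is to prove both implications by separating the inequality system of $P$ into its three types (upper, lower, lateral) and analyzing, for each $x\in\pi(P)$ where $\pi\colon\R^{k+1}\to\R^k$ is the coordinate projection onto the first $k$ components, the vertical ``fiber'' $\{y\in\R : (x,y)\in P\}$. The point is that this fiber is a closed interval whose upper endpoint is determined by the upper facet inequalities and whose lower endpoint is determined by the lower ones, while the lateral inequalities only govern whether $x\in\pi(P)$ in the first place. Under this translation, condition~\eqref{it:prop2} just says $\pi(P)\cap\Z^k = T$, and condition~\eqref{it:prop1} says that for every $t\in T$ the fiber over $t$ collapses to the single value $h(t)$.

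For the forward direction, assume $P\cap(\Z^k\times\R)=\lift_h(T)$. Condition~\eqref{it:prop2} is then a set-theoretic unpacking: $T\subseteq\pi(P)\cap\Z^k$ is witnessed by the points $(t,h(t))\in P$, and conversely any $x\in\pi(P)\cap\Z^k$ admits some $y\in\R$ with $(x,y)\in P\cap(\Z^k\times\R)=\lift_h(T)$, forcing $x\in T$. For condition~\eqref{it:prop1}, I would argue by contradiction: if some $(t,h(t))$ failed to lie on any upper facet, then every upper facet inequality would be strict at $(t,h(t))$, so a small upward perturbation $(t,h(t)+\epsilon)$ still lies in $P$. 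This perturbed point would belong to $P\cap(\Z^k\times\R)$ but not to $\lift_h(T)$ (since $h$ is a function of $t$), contradicting the relaxation property. The case of lower facets is entirely symmetric.

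For the reverse direction, assume \eqref{it:prop1} and \eqref{it:prop2}. The inclusion $\lift_h(T)\subseteq P$ is immediate from~\eqref{it:prop1}, since any point lying on a facet of $P$ lies in $P$. For the reverse containment, take $(x,y)\in P\cap(\Z^k\times\R)$; by~\eqref{it:prop2} we have $x\in T$. Using~\eqref{it:prop1}, some upper facet inequality $y\le u^\top x'+v$ is tight at $(x,h(x))$, so $h(x)=u^\top x+v$, which forces $y\le h(x)$ throughout the fiber over $x$; the lower-facet condition analogously yields $y\ge h(x)$. Hence $y=h(x)$ and $(x,y)\in\lift_h(T)$, completing the equality.

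I do not anticipate a genuine obstacle: the proposition is bookkeeping that identifies what a $(k,1)$-mixed-integer relaxation looks like when expressed via a lift. The only mild subtlety is making sure that ``$(t,h(t))$ lies on an upper facet'' really constrains the entire fiber over $t$ from above, which is forced by the single tight upper-facet inequality together with the fact that this inequality bounds $y$ from above at every point of $P$; the same remark applies symmetrically to the lower facet.
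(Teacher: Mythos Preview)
Your proposal is correct and follows essentially the same argument as the paper's own proof: both directions are handled by analyzing, for each $x\in\Z^k$, the vertical fiber over $x$ and using that an upper (resp.\ lower) facet tight at $(x,h(x))$ forces $y\le h(x)$ (resp.\ $y\ge h(x)$), while condition~\eqref{it:prop2} pins down exactly which $x$ occur. Your write-up is in fact slightly more explicit than the paper's (you spell out the perturbation argument and the fiber bounds), but there is no substantive difference in strategy.
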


\begin{proof}
First, suppose that $P$ is a relaxation of $\lift_h(T)$ within $\Z^k\times \R$. If, by contradiction, there is $(x,h(x))\in \lift_h(T)$ that is not contained in any upper facet of $P$, then there is some~$y>h(x)$ such that $(x,y)\in P$, contradicting the fact that $P\cap \Z^k\times \R = \lift_h(T)$. Arguing similarly for lower facets, one checks Condition \ref{it:prop1}. Condition \ref{it:prop2} follows immediately from the definition of relaxation.

For the opposite direction, let $P$ satisfy Conditions~\ref{it:prop1} and~\ref{it:prop2}. Condition~\ref{it:prop1} easily implies that $P\cap (\Z^k\times \R) \supseteq \lift_h(T)$. For the reverse inclusion, let $(x,y)\in P\cap (\Z^k\times \R)$. We need to show that $(x,y) \in \lift_h(T)$. By Condition~\ref{it:prop2}, one has $x \in T$. If $(x,y)$ were not in $\lift_h(T)$, one would have $y\neq h(x)$.
Then, if $y > h(x)$ (resp. $y<h(x)$), we conclude that $(x,y)$ violates every upper facet (resp.\ lower facet) inequality of $P$, in particular those that are active on $(x,h(x))$, a contradiction. 
\end{proof} 

Motivated by a free-sum operation for polytopes \cite[15.1.3]{henk200416}, we
introduce the \emph{free sum} of~$X \subseteq \Z^m$ and $Y \subseteq \Z^n$
as
\[
  X \oplus Y \define \setcond{ (x,\mathbf{0}) }{x \in X} \cup \setcond{(\mathbf{0},y)}{y \in Y}.   
\]
This operation will be useful, because $\Delta_k \oplus \Delta_m = \Delta_{k+m}$.

\begin{lemma} \label{lem:X:oplus:Delta} 
  Let $X, Y$ be two disjoint finite subsets of $\Z^k$ with
  $\mathbf{0} \in X$ and $|Y|=m$.
  Let~$h\colon X \cup Y \to \R$ be such that $h(p) =0$ for every $p \in
  X$ and the values $h(p)$ with $p \in Y$ are linearly independent
  over $\Q$.
  Then 
  \(
  \rc(X \oplus \Delta_m) \le \rc(\lift_h(X \cup Y), \Z^k \times \R). 
  \)
\end{lemma}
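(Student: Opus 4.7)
The plan is to mimic, in a higher-dimensional setting, the projection trick that produced the five-dimensional counterexample: use the $\Q$-linear independence of the heights $h(y)$ to cut the unwanted integer points of the preimage of the given relaxation. Concretely, write $Y=\{y_1,\dots,y_m\}$ and, assuming the right-hand side $N\define \rc(\lift_h(X\cup Y),\Z^k\times\R)$ is finite (otherwise there is nothing to prove), fix a polyhedron $Q\subseteq\R^k\times\R$ with $N$ facets such that $Q\cap(\Z^k\times\R)=\lift_h(X\cup Y)$.

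Define the affine map $\pi\colon \R^k\times\R^m\to\R^k\times\R$ by
\[
  \pi(x,z)\define\Bigl(x+\sum_{i=1}^{m}z_i y_i,\ \sum_{i=1}^{m}z_i h(y_i)\Bigr),
\]
and let $P\define \pi^{-1}(Q)$. Since each facet inequality of $Q$ pulls back to a single affine inequality in $(x,z)$, $P$ has at most $N$ facets, so it suffices to verify that $P\cap\Z^{k+m}=X\oplus\Delta_m$. The forward inclusion is a direct computation: $\pi(x,\mathbf{0}_m)=(x,0)=(x,h(x))\in\lift_h(X)$ for $x\in X$, and $\pi(\mathbf{0}_k,e_i)=(y_i,h(y_i))\in\lift_h(Y)$, so every point of $X\oplus\Delta_m$ lies in $P\cap\Z^{k+m}$.

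The heart of the argument is the reverse inclusion, and this is where the $\Q$-linear independence hypothesis does all the work. Take $(x,z)\in P\cap\Z^{k+m}$. Then the first $k$ coordinates of $\pi(x,z)$ are integer, so $\pi(x,z)\in Q\cap(\Z^k\times\R)=\lift_h(X\cup Y)$. Hence the last coordinate $\sum_i z_i h(y_i)$ is either $0$ or equal to some $h(y_j)$. In the first case, since $z\in\Z^m\subseteq\Q^m$ and $h(y_1),\dots,h(y_m)$ are $\Q$-linearly independent, I conclude $z=\mathbf{0}_m$, whence $\pi(x,z)=(x,0)$ forces $x\in X$ and $(x,z)\in X\oplus\Delta_m$. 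In the second case, the relation $(z_j-1)h(y_j)+\sum_{i\ne j}z_i h(y_i)=0$ and the same independence argument give $z=e_j$, and then the first $k$ coordinates of $\pi(x,z)=(x+y_j,h(y_j))$ must agree with $y_j$, forcing $x=\mathbf{0}_k$ and $(x,z)=(\mathbf{0}_k,e_j)\in X\oplus\Delta_m$.

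The main (and essentially only) obstacle I anticipate is making sure the $\Q$-linear independence hypothesis is deployed correctly in the second case above, where one must treat $h(y_j)$ as an element of the dependency and not of the constant side; aside from that, the construction is routine and the counting of facets is immediate because $P$ is defined by a pull-back of the facet-inequalities of $Q$.
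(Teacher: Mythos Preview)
Your proof is correct and takes essentially the same approach as the paper: your map $\pi$ is exactly the composition $\psi\circ\phi$ the paper uses, and your direct verification that $P=\pi^{-1}(Q)$ is a relaxation of $X\oplus\Delta_m$ unpacks what the paper obtains by invoking the injective-projection lemma (Lemma~\ref{lem:inj:proj}) together with the monotonicity $\rc(\lift_h(X\cup Y),\psi(\Z^{k+m}))\le\rc(\lift_h(X\cup Y),\Z^k\times\R)$. The only cosmetic slip is that your ``forward inclusion'' actually establishes $X\oplus\Delta_m\subseteq P\cap\Z^{k+m}$ rather than the other containment, but both directions are argued correctly.
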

      
\begin{proof}
  Let $Y = \{p_1,\ldots,p_m\}$. 
  The map $\phi\colon \R^k\times \R^m \rightarrow \R^k\times \R^m$ which is
  defined as $\phi(a,b) \define \left(a + \sum_{i=1}^m b_i p_i, b\right)$
  is unimodular and it satisfies 
  \[
    \phi(X \oplus \Delta_m)
    =
    \setcond{ (x,\mathbf{0}) }{x \in X} \cup \setcond{ (p_i , e_i )  }{i
      \in \{1,\ldots,m\}}.
  \]
  We combine this map with the map $\psi(a,b) \define \left( a,
    \sum_{i=1}^m h(p_i) b_i \right)$.
  Since $h(p_1),\ldots,h(p_m)$ are linearly independent over
  $\Q$, the map $\psi|_{\Z^{k+m}}$ is injective.
  Consequently $(\psi \circ \phi)|_{\Z^{k+m}}$ is injective as well, so we
  obtain
  \begin{align*}
    \rc(X \oplus \Delta_m, \Z^{k+m}) & \le \rc( \psi(\phi( X \oplus \Delta_m)), \psi(\phi(\Z^{k+m})) & & \text{(by Lemma~\ref{lem:inj:proj})}. 
    \\ & = \rc(\lift_h(X \cup Y), \psi( \Z^{k+m} ))
    \\ & \le \rc(\lift_h(X \cup Y), \Z^k \times \R). & & \text{(since $\psi( \Z^{k+m} ) \subseteq \Z^k \times \R$)}
  \end{align*} 
\end{proof} 

Having Lemmas~\ref{lem:inj:proj} and \ref{lem:X:oplus:Delta} in mind, the summary of the proof of the upper bound $\rc(\Delta_5) \le 5$ is as follows.
First, we find it convenient to go from $\Delta_5$ to
$\Delta$.
One can represent these two sets as columns of two
matrices (see Figures~\ref{fig:mat1} and~\ref{fig:mat2}), which easily allows
to check that they are unimodularly equivalent.
Second, we apply the projection $\pi$ on $\R^4$ along line $\ell$, as defined in
Observation \ref{obs:proj}.
This corresponds to applying Lemma~\ref{lem:inj:proj}, where~$X=\Delta$,
$Z=\Z^5$.
The points in~$\pi(\Delta)$ are listed as columns of the matrix in
Figure~\ref{fig:mat3}.
Now, in light of Lemma~\ref{lem:X:oplus:Delta}, we see the points
in~$\pi(\Delta)$ as coming from a lift of points in $\Z^3$.
Let $X=\Delta_3$ and $Y=\{(1,0,1), (0,1,1)\}$, and $h$ such that $h(p)=0$
for $p\in X$, $h(1,0,1)= 1$, and $h(0,1,1)= -1/\sqrt{2}$.
Then, by Lemma~\ref{lem:X:oplus:Delta}, we have that $\rc(\Delta_5)=\rc(X
\oplus \Delta_2) \le \rc(\lift_h(X \cup Y), \Z^3 \times \R)$.

\begin{figure}[tb]
    \centering
    \subfloat[]{$\left(
\begin{array}{cccc|cc}
   0 & 1 & 0 & 0 & 0 & 0\\
   0 & 0 & 1 & 0 & 0 & 0 \\
   0 & 0 & 0 & 1 & 0 & 0 \\
   \hline
   0 & 0 & 0 & 0 & 1 & 0 \\
   0 & 0 & 0 & 0 & 0 & 1 \\
\end{array}\right)$ \label{fig:mat1}}
    \subfloat[]{$\left(
\begin{array}{cccc|cc}
   0 & 1 & 0 & 0 & 1 & 0\\
   0 & 0 & 1 & 0 & 0 & 1 \\
   0 & 0 & 0 & 1 & 1 & 1 \\
   \hline
   0 & 0 & 0 & 0 & 1 & 0 \\
   0 & 0 & 0 & 0 & 0 & 1 \\
\end{array}\right)$\label{fig:mat2}}
\subfloat[]{$\left(\begin{array}{cccc|cc}
   0 & 1 & 0 & 0 & 1 & 0\\
   0 & 0 & 1 & 0 & 0 & 1 \\
   0 & 0 & 0 & 1 & 1 & 1 \\
   \hline
   0 & 0 & 0 & 0 & 1 & -\frac{1}{\sqrt{2}} \\
\end{array}\right)$\label{fig:mat3}}
\caption{Visualization of the reductions in the proof of Theorem~\ref{thm:dim5}.}
\label{fig:mat}
\end{figure}

In order to obtain Theorem \ref{thm:dim5}, it now remains to show that $P$,
defined as in Section~\ref{sec:counterex}, is a relaxation of $\lift_h(X
\cup Y)$ within $\Z^3 \times \R$.
One can do that by setting $T=X\cup Y$ and verifying
Conditions~\ref{it:prop1} and~\ref{it:prop2} in Proposition~\ref{prop:relaxation}.
Condition~\ref{it:prop1} can be easily verified by the reader (notice that
inequalities \eqref{constr:1},\eqref{constr:2},\eqref{constr:3} define
upper facets of $P$, and inequalities \eqref{constr:4},\eqref{constr:5}
define lower facets).
Verifying Condition \ref{it:prop2} can be done directly via Fourier-Motzkin elimination, as mentioned in the proof of  Theorem~\ref{thm:dim5}.

In summary, our proof of Theorem~\ref{thm:dim5} consists of three steps:
\begin{enumerate*}
\item[\textbf{project}] (a unimodular transformation of) the simplex to a
  lower dimensional space;
\item[\textbf{lift}] the projection into a space with one continuous coordinate;
\item[\textbf{relax}] the mixed-integer point set by finding a small relaxation.
\end{enumerate*}
To prove Theorem~\ref{thm:main}, we once again utilize this project-lift-relax
strategy.
That is, the steps of our proof are:
\begin{enumerate}[label=(S\arabic*)]
\item\label{S1} Find a suitable unimodular
 copy  of $\Delta_d$  and
  project this copy onto~$\Z^k$ via a map~$\pi$.
\item\label{S2} Construct a suitable height function~$h\colon \pi(\tilde{\Delta}_d) \to \R$ 
\item\label{S3} Derive a relaxation~$P$
  of~$\lift_h(\pi(\tilde{\Delta}_d))$ within~$\Z^k \times \R$.
\end{enumerate}
By a variation of Lemma~\ref{lem:X:oplus:Delta}, which we formulate below as Lemma~\ref{lem:X:oplus:Delta:special}, 
the number of facets of~$P$ will
provide an upper bound on~$\rc(\tilde{\Delta}_d)$ and thus
on~$\rc(\Delta_d)$ since~$\Delta_d$ and~$\tilde{\Delta}_d$ are unimodularly
equivalent, provided the height function~$h$ satisfies all assumptions.
In the following subsections, we discuss each of these three steps in
detail.

\section{The Proof of Theorem~\ref{thm:main}}
\label{sec:thm3}

This section is devoted to proving Theorem~\ref{thm:main}.

\subsection{Project: Selecting the Right Simplex and Its Projection}
\label{sec:project}

Step~\ref{S1} requires us to select a set~$\tilde{\Delta}_d$ that is unimodularly equivalent to~$\Delta_d$ as well as a
projection~$\pi\colon \R^d \to \R^k$.
To this end, consider the following~$d\times (d+1)$ matrix
\[
M = \begin{pmatrix}
 \mathbf{0}_k & I_k & B \\
  \mathbf{0}_{d-k} & \mathbf{0}_{d-k, k} & I_{d-k}
\end{pmatrix},
\]
where $I_k$ denotes
the identity matrix of order $k$, and $B$ is the $k \times (d-k)$ matrix
whose columns are all the vectors in $\{0,1\}^k$ with at least two
1-entries.
Note that this definition requires~$d = 2^k - 1$.
Although this assumption might look restrictive, we can reduce the proof of
Theorem~\ref{thm:main} to this special case as we will see later.

Let $\tilde\Delta_d\subset \R^d$ be the set of columns of $M$.
It is easy to see that~$\tilde{\Delta}_d$ and~$\Delta_d$ are unimodularly
equivalent.
We select~$\pi$ to be the orthogonal projection of~$\R^d$ onto the
first~$k$ coordinates.
In the spirit of Lemma~\ref{lem:X:oplus:Delta}, we then let $X\subset
\Z^k$ be the set containing the first $k+1$ columns of $M$ (truncated to
the first $k$ coordinates), i.e., $X =\Delta_k$, and $Y\subset \Z^k$ be the set containing the
truncation of the other columns of $M$, so that $T \define X\cup Y=
\{0,1\}^k$.

\subsection{Lift: Finding the Right Height Function}
\label{sec:lift}

Our next goal is to find a suitable height function~$h$ for Step~\ref{S2}.
In view of Step~\ref{S3}, we need to select~$h$ in such a way that the we
can easily derive a mixed-integer relaxation of~$\lift_h(T)$.
By Proposition~\ref{prop:relaxation}, such a relaxation is
given by taking the~$2k$ box constraints $z \in [0,1]^k \times \R$ and
selecting upper and lower facets of~$\clift_h(Y)$ that
cover all points~$(z,h(z))$ for~$z \in T$.
Our strategy for finding a small relaxation is thus
\begin{enumerate}[label=(S\arabic*a)]
  \setcounter{enumi}{2}
\item\label{S3a}
  to find a small covering of the points~$(z, h(z))$ for~$z \in T$
  consisting of upper and lower facets, respectively.
\end{enumerate}

To be able to quantify the minimum size of a covering by upper and lower
facets, respectively, we need to derive combinatorial properties of
the facet structure of~$\clift_h(T)$.
In our analysis in the next subsection, it will thus be convenient to have
a function~$h$ that satisfies~$h(z) \in \Q$ for all~$z \in T$.
In view of Lemma~\ref{lem:X:oplus:Delta}, however, we require
that the values~$h(y)$ for all~$y \in Y$ are linearly independent
over~$\Q$.
To achieve this, we will slightly perturb~$h$, but note that this may change the
combinatorial structure of the facets of~$\clift_h(T)$.
As we show next, simplicial facets of~$\clift_h(T)$ remain simplicial after
small perturbations of~$h$.
Thus, we can still pursue our strategy if we restrict ourselves to
cover the points~$(z, h(z))$ in Step~\ref{S3a} by simplicial facets.

Therefore, we define the simplicial upper (resp.\ lower) covering numbers
of $\lift_h(T)$, denoted by $\sucn_h(T)$ (resp. $\slcn_h(T)$), as the
minimum number of simplicial upper (resp.\ lower) facets that cover all
vertices of the upper (resp.\ lower) part of  $\clift_h(T)$.
If no covering by simplicial facets exist, the respective number is set to
be $\infty$.

\begin{lemma}[On semi-continuity] \label{lem:semicont}
  Let $T$ be an $m$-dimensional subset of $\{0,1\}^m$.
  Then, for every $h \in \R^T$ which is not a restriction of an affine
  function, there exists an open neighborhood $U \subseteq \R^T$ of $h$
  with the property that for every $\tih \in  U$  one has
  \begin{align*}
    \sucn_{\tih}(T) & \le \sucn_h (T), 
    \\	\slcn_{\tih}(T) & \le \slcn_{h} (T).
  \end{align*}
\end{lemma}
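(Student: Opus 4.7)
The strategy is to leverage the 0/1 structure of $T$ to characterize simplicial upper and lower facets of $\clift_h(T)$ by strict linear inequalities in $h$, which are preserved under small perturbations.

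First I would observe that every $t \in T$ is a vertex of $[0,1]^m$, hence extreme in $\conv(T)$. Therefore $(t,h(t))$ is the unique point of $\clift_h(T)$ whose projection onto the first $m$ coordinates equals $t$, and it lies on both the upper and the lower envelope of $\clift_h(T)$. In particular, the set of vertices of the upper (resp.\ lower) part of $\clift_h(T)$ coincides with $\lift_h(T)$ independently of $h$, as long as $\clift_h(T)$ is full-dimensional.

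Next I would characterize simplicial upper facets combinatorially. Given $S \subseteq T$ with $|S|=m+1$ and $S$ affinely independent in $\R^m$, let $f_S\colon \R^m\to\R$ be the unique affine interpolation of $h$ at the points of $S$. Then $S$ defines a simplicial upper facet of $\clift_h(T)$ if and only if
\[
  f_S(t) > h(t) \quad \text{for every } t \in T\setminus S.
\]
The strict inequality simultaneously guarantees that $\{y = f_S(x)\}$ supports $\clift_h(T)$ from above and that no extra lifted point $(t,h(t))$ with $t\notin S$ is incident to the resulting face; it is precisely at this step that the hypothesis $T\subseteq\{0,1\}^m$ enters, since otherwise an equality $f_S(t)=h(t)$ with $t\notin S$ could occur without yielding an extra vertex. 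Simplicial lower facets are characterized analogously with the reversed inequality.

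Given an optimal cover $\{S_1,\dots,S_k\}$ of $\lift_h(T)$ by simplicial upper facets of $\clift_h(T)$ with $k=\sucn_h(T)$ (the case $k=\infty$ being vacuous), the quantity $f_{S_i}^{\tih}(t) - \tih(t)$ is linear and continuous in $\tih\in\R^T$ and positive at $\tih=h$, for each $i\in[k]$ and each $t\in T\setminus S_i$. Intersecting these finitely many open conditions, together with the open condition that $\tih$ is not the restriction of an affine function, yields an open neighborhood $U_1$ of $h$ on which every $S_i$ remains a simplicial upper facet of $\clift_{\tih}(T)$; since the set of upper vertices is still $\lift_{\tih}(T)$, the same family $\{S_1,\dots,S_k\}$ covers it, giving $\sucn_{\tih}(T)\le k$. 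A symmetric argument on the lower side produces a neighborhood $U_2$ with $\slcn_{\tih}(T)\le\slcn_h(T)$, and $U \define U_1\cap U_2$ satisfies both assertions. The only genuinely delicate step is the strict-inequality characterization of simpliciality, which relies squarely on the 0/1 structure of $T$; the remainder is routine openness of strict linear inequalities.
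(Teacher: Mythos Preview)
Your proposal is correct and follows essentially the same approach as the paper: both characterize a simplicial upper facet on an affinely independent set $S\subseteq T$ by the strict inequalities $f_S(t)>h(t)$ for $t\in T\setminus S$ (the paper expresses this via a determinant, you via affine interpolation), observe that these conditions are linear in $h\in\R^T$ and hence open, and then intersect the finitely many open sets coming from an optimal cover. Your explicit remark that the $0/1$ hypothesis is what forces every $(t,h(t))$ to be a vertex of $\clift_h(T)$, so that equality $f_S(t)=h(t)$ genuinely destroys simpliciality, makes transparent a point the paper uses implicitly when it identifies $\vert(\clift_h(T))$ with $\lift_h(T)$; the extra open condition you impose (that $\tih$ not be affine) is in fact automatic once $T\setminus S\neq\emptyset$ and the strict inequalities hold, but it does no harm.
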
 

\begin{proof}
  We only prove the first inequality, since the proof of the second inequality is completely analogous. 
  Every simplicial upper facet of $Q\define \clift_h(T)$ is given by
  \[
    F_h
    \define
    \conv \left\{
      \begin{pmatrix}
        v_0 \\
        h(v_0)
      \end{pmatrix},
      \ldots,
      \begin{pmatrix}
        v_m \\
        h(v_m)
      \end{pmatrix}
    \right\},
  \]
  where $v_0,\ldots,v_m \in T$ are affinely independent.
  By standard linear algebra, with an appropriate ordering of
  $v_0,\ldots,v_m$, the respective facet-defining inequality for $Q$ is
  given by
  \begin{equation} \label{det:ineq}
    \det 
    \begin{pmatrix} 
      1 & \cdots & 1 & 1 \\
      v_0 & \cdots & v_m & x \\
      h(v_0) & \cdots & h(v_m) & y 
    \end{pmatrix}
    \le 0. 
  \end{equation}
  By expanding the determinant with respect to the last column, one can see
  that  the fact that $F_h$ is an upper facet can be expressed as
  \[
    \det
    \begin{pmatrix}
      1 & \cdots & 1 \\
      v_0 & \cdots & v_m
    \end{pmatrix}
    > 0\]
  and the  strict validity of the inequality on~$\vert(Q) \setminus \setcond{\binom{v_i}{h(v_i)} }{i\in\{0,\ldots,m\}}$:
  \begin{align} \label{det:strict:ineq}
    \underbrace{ 
    \det 
    \begin{pmatrix} 
      1 & \cdots & 1 & 1 
      \\ v_0 & \cdots & v_m & x
      \\ h(v_0) & \cdots & h(v_m) & h(x)
    \end{pmatrix} }_{\eqqcolon\phi_{h}(x)} & < 0  & & \text{for all} \ x \in T \setminus \{v_0,\ldots,v_m\}. 
  \end{align} 
  Since $\phi_h(x)$ is linear in $h$, the set 
  \[
    H \define \setcond{\tih  \in \R^T}{\phi_{\tih}(x) < 0 \text{ for all
      } x \in T \setminus \{v_0,\ldots,v_m\}}
  \]
  is open (it is the interior of a polyhedron) and it satisfies $h \in
  H$.
  By construction, for every $\tih \in H$, the set $F_\tih$ is an upper facet of
  $\clift_{\tih}(T)$.

  Thus, whenever $t$ simplicial upper facets $F_h^1,\ldots,F_h^t$ 
  cover the vertex set of $\clift_h(T)$ we see there exist $t$
  respective open sets $H^1,\ldots,H^t$ with $h \in H^i$ for every
  $i\in\{1,\ldots,t\}$.
  It follows that, for every $\tih$ in the open set $H^1\cap \cdots \cap H^t$,
  the sets $F_\tih^1,\ldots,F_\tih^t$ are simplicial upper facets of $\clift_\tih(T)$
  that cover the vertex set of $\clift_\tih(T)$.
  In particular, the latter holds for every $\tih$ in a sufficiently small
  neighborhood of $h$.
\end{proof} 

Another difficulty arises in the tools that we have available for
Step~\ref{S2}, because Lemma~\ref{lem:X:oplus:Delta} requires a height
function~$h$ with~$h(x) = 0$ for all~$x \in X$.
We were not able to find such a height function that allows us to find a
small relaxation in Step~\ref{S3a}.
For this reason, we need a strengthening of Lemma~\ref{lem:X:oplus:Delta},
which we give in Lemma~\ref{lem:X:oplus:Delta:special} below.
In preparation for this, we provide the following proposition, which is
actually a classical observation used in the theory of regular polyhedral
subdivisions.
For the sake of completeness, we give a short proof.

\begin{prop} \label{prop:affine:diff} 
  Let $X \subseteq \Z^m$ and $h, \tih\colon X \to \R$ be such that $h(x) -
  \tih(x)=f(x)$ for every $x\in X$, where  $f\colon\R^m\to \R$ is an
  affine function.
  Then 
  \[
    \rc( \lift_h(X), \Z^m \times \R) = \rc(\lift_\tih(X), \Z^m \times \R). 
  \]
\end{prop}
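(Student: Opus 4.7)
The plan is to exhibit an explicit affine bijection of $\R^m\times\R$ that swaps the two lifts while preserving both the ambient set $\Z^m\times\R$ and the number of facets of any polyhedron. Concretely, I would define
\[
\Phi\colon \R^m\times\R \to \R^m\times\R, \qquad \Phi(x,y) \define (x, \, y - f(x)).
\]
Since $f$ is affine, $\Phi$ is an affine bijection (its inverse is $\Phi^{-1}(x,y)=(x,y+f(x))$). Because $\Phi$ fixes the first $m$ coordinates and only shifts the last one, one checks immediately that $\Phi(\Z^m\times\R)=\Z^m\times\R$.

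Next I would verify that $\Phi$ sends one lift to the other: for every $x\in X$,
\[
\Phi(x,h(x)) = (x,\, h(x)-f(x)) = (x,\, \tih(x)),
\]
so $\Phi(\lift_h(X))=\lift_\tih(X)$. Moreover, any affine bijection maps a polyhedron with $k$ facets to a polyhedron with $k$ facets, since the images of the defining half-spaces under $\Phi^{-1}$ are again half-spaces and the incidence structure is preserved.

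With these observations, the equality of relaxation complexities is immediate. Indeed, if $P\subseteq\R^m\times\R$ is a relaxation of $\lift_h(X)$ within $\Z^m\times\R$ with $k$ facets, i.e.\ $P\cap(\Z^m\times\R)=\lift_h(X)$, then applying $\Phi$ to both sides and using $\Phi(\Z^m\times\R)=\Z^m\times\R$ together with the bijectivity of $\Phi$ gives
\[
\Phi(P)\cap(\Z^m\times\R) \;=\; \Phi\bigl(P\cap(\Z^m\times\R)\bigr) \;=\; \Phi(\lift_h(X)) \;=\; \lift_\tih(X),
\]
and $\Phi(P)$ has exactly $k$ facets. Hence $\rc(\lift_\tih(X),\Z^m\times\R)\le\rc(\lift_h(X),\Z^m\times\R)$. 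The reverse inequality follows by applying the same argument to $\Phi^{-1}$ (which corresponds to replacing $f$ by $-f$ and swapping the roles of $h$ and $\tih$).

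There is essentially no obstacle here: the only thing to be careful about is that $f$ need not be integer-valued on $\Z^m$, but this is precisely why we lift to the mixed-integer set $\Z^m\times\R$ rather than to $\Z^{m+1}$, so the non-integer shift in the last coordinate does not disturb the ambient set.
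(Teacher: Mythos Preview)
Your argument is correct and essentially identical to the paper's: both exhibit the affine bijection $(x,y)\mapsto(x,y\pm f(x))$ of $\R^m\times\R$ that preserves $\Z^m\times\R$ and carries one lift to the other, thereby transferring relaxations facet-count-preservingly. The only cosmetic difference is a sign (your $\Phi$ is the inverse of the paper's $\phi$) and that you spell out both inequalities explicitly.
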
 
\begin{proof}
  The affine map $\phi(x,y) = (x, y + f(x))$ maps both $\R^m\times\R$ and
  $\Z^m \times \R$ bijectively onto themselves and it satisfies
  $\phi(x,\tih(x)) = (x,h(x))$, which means that $\phi$ maps
  $\lift_{\tih}(X)$ bijectively onto $\lift_h(X)$.
  Thus every relaxation of $\lift_{\tih}(X)$ within $\Z^m \times \R$ is
  bijectively  mapped to a relaxation of $\lift_h(X)$ within $\Z^m \times
  \R$ and vice versa.
  This yields the desired assertion. 
\end{proof} 

\begin{lemma} \label{lem:X:oplus:Delta:special}
  Let $X$ be an affinely independent subset of $\Z^k$ with $\mathbf{0} \in
  X$, let $Y$ be a finite $m$-element subset of $\Z^k$ disjoint with $X$,
  and let $h\colon X \cup Y \to \R$ be such that \
  \begin{enumerate}[label=(\alph*)]
  \item  $h(x) \in \Q$ for every $x \in X$ and
  \item  the values $h(y)$ with $y \in Y$ together with $1$ are linearly
    independent over $\Q$.
  \end{enumerate} 
  Then $\rc(X \oplus \Delta_m) \le \rc(\lift_h(X \cup Y), \Z^k \times \R)$. 
\end{lemma}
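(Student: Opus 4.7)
The plan is to reduce to Lemma~\ref{lem:X:oplus:Delta}, which is the special case with $h(x) = 0$ for all $x \in X$. The bridge between the two statements will be Proposition~\ref{prop:affine:diff}: by subtracting from $h$ a suitable affine function $f$ that interpolates $h|_X$, the lifted point set is shifted through an affine unimodular transformation preserving the relaxation complexity. Hypothesis~(a), that $h$ takes rational values on $X$, is exactly what is needed to guarantee that $f$ can be chosen with rational coefficients; this in turn is what is needed so that hypothesis~(b) transfers to $\Q$-linear independence of the adjusted height values on $Y$, allowing Lemma~\ref{lem:X:oplus:Delta} to be invoked.

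Concretely, I would first build an affine function $f\colon \R^k \to \R$ of the form $f(x) = a^\top x + b$ with $a \in \Q^k$ and $b \in \Q$ such that $f(x) = h(x)$ for every $x \in X$. Since $X$ is affinely independent, this interpolation condition is a consistent linear system; since $X \subseteq \Z^k$ and the required values $h|_X$ are rational, the system has an integer coefficient matrix and rational right-hand side, and so admits a rational solution by standard linear algebra. I would then set $\tih(p) \define h(p) - f(p)$ for $p \in X \cup Y$. By construction $\tih(x) = 0$ for every $x \in X$. To check that the values $\tih(y)$ with $y \in Y$ are $\Q$-linearly independent, suppose $\sum_{y \in Y} \lambda_y \tih(y) = 0$ with $\lambda_y \in \Q$. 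Then $\sum_y \lambda_y h(y) = \sum_y \lambda_y f(y)$, and the right-hand side equals some rational number $r$ because $f$ has rational coefficients and $y \in \Z^k$. Thus $\sum_y \lambda_y h(y) - r \cdot 1 = 0$ is a $\Q$-linear relation among the $h(y)$ and $1$, and hypothesis~(b) forces $\lambda_y = 0$ for every $y \in Y$.

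It then remains to assemble. Since $h - \tih = f$ is affine, Proposition~\ref{prop:affine:diff} yields $\rc(\lift_h(X \cup Y), \Z^k \times \R) = \rc(\lift_{\tih}(X \cup Y), \Z^k \times \R)$. By construction $\tih$ satisfies the hypotheses of Lemma~\ref{lem:X:oplus:Delta}, and that lemma gives $\rc(X \oplus \Delta_m) \le \rc(\lift_{\tih}(X \cup Y), \Z^k \times \R)$. Combining the two produces the desired inequality. I do not anticipate a serious obstacle in this argument: the only place where hypothesis~(a) is genuinely used is in securing the rational interpolant $f$, and this is a routine linear-algebra fact; everything else is a mechanical combination of previously established tools.
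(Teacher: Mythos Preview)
Your proposal is correct and follows essentially the same approach as the paper: construct a rational-coefficient affine interpolant $f$ of $h|_X$ (using affine independence of $X$ and hypothesis~(a)), set $\tih = h - f$, verify the $\Q$-linear independence of the $\tih(y)$ via hypothesis~(b), and then combine Lemma~\ref{lem:X:oplus:Delta} with Proposition~\ref{prop:affine:diff}. Your write-up is in fact slightly more explicit than the paper's on both the existence of the rational interpolant and the linear-independence verification.
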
 
\begin{proof} 
  Since $X$ is affinely independent, there exists an affine function $f$
  that coincides with~$h$ on $X$.
  Since $h(x) \in \Q$ for every $x \in X$, we can choose $f$ with
  rational coefficients.
  Then the function $h - f$ is equal to $0$ on $X$. 

  We now verify that the assumptions of Lemma~\ref{lem:X:oplus:Delta} are
  fulfilled by checking that the values $h(y) - f(y)$  with $y \in Y$ are
  linearly independent over $\Q$.
  By the choice of $f$, we have $f(y) \in \Q$ for every $y \in Y$.
  Hence, since the values $h(y)$ with $y \in Y$ together with $1$ are
  linearly independent over $\Q$, it follows the values $h(y)-f(y)$ with $y
  \in Y$ are linearly independent over $\Q$.
  Thus, using Lemma~\ref{lem:X:oplus:Delta} and then
  Proposition~\ref{prop:affine:diff}, we obtain
  \[
    \rc(X \oplus \Delta_m)
    \le
    \rc(\lift_{h-f}(X \cup Y), \Z^k \times \R)
    =
    \rc(\lift_h(X \cup Y, \Z^k \times \R). 
  \]
\end{proof} 

Equipped with these tools, we are able to provide a statement that allows
us to bound the relaxation complexity in terms of simplicial upper and
lower covering numbers.

\begin{lemma}\label{lem:X:oplus:Delta:simp}
  Let $X\subseteq \{0,1\}^k$,
  let $Y$ be an $m$-element subset of $\{0,1\}^k$
  disjoint with~$X$, and let $h\colon X \cup Y \to \R$ be any
  function which is not a restriction of an affine function and satisfies
  $h(x) \in \Q$ for all $x \in X$.
  Then 
  \[
    \rc(X \oplus \Delta_m) \le \rc(X \cup Y)  + \sucn_h(X \cup Y ) + \slcn_h(X \cup Y). 
  \]
\end{lemma}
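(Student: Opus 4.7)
The plan is to combine the semi-continuity of simplicial covering numbers (Lemma~\ref{lem:semicont}) with the lifting reduction (Lemma~\ref{lem:X:oplus:Delta:special}) by constructing an explicit mixed-integer relaxation of a suitably perturbed lift via Proposition~\ref{prop:relaxation}. I would first perturb $h$ to obtain a map $\tih\colon X \cup Y \to \R$ with $\tih|_X = h|_X$ and such that the values $\tih(y)$ for $y \in Y$ together with $1$ are linearly independent over $\Q$. Since $\R$ is infinite-dimensional over $\Q$ (as recalled in Section~\ref{sec:notation}), such a $\tih$ exists arbitrarily close to $h$. For $\tih$ close enough to $h$, it still fails to be a restriction of an affine function, and Lemma~\ref{lem:semicont} yields $\sucn_\tih(X \cup Y) \le \sucn_h(X \cup Y)$ and $\slcn_\tih(X \cup Y) \le \slcn_h(X \cup Y)$.

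Next, I would assemble a polyhedron $P \subseteq \R^k \times \R$ by intersecting three families of inequalities: (i) the $\rc(X \cup Y)$ inequalities describing a relaxation $R \subseteq \R^k$ of $X \cup Y$ within $\Z^k$, interpreted as lateral inequalities that ignore the last coordinate; (ii) $\sucn_\tih(X \cup Y)$ simplicial upper facet inequalities of $\clift_\tih(X \cup Y)$ realizing an optimal upper covering; and (iii) $\slcn_\tih(X \cup Y)$ simplicial lower facet inequalities realizing an optimal lower covering. The polyhedron $P$ is thus defined by at most $\rc(X \cup Y) + \sucn_\tih(X \cup Y) + \slcn_\tih(X \cup Y)$ inequalities.

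To show that $P \cap (\Z^k \times \R) = \lift_\tih(X \cup Y)$, I would invoke Proposition~\ref{prop:relaxation} with $T \define X \cup Y$. Condition~\ref{it:prop2} holds because the projection of $P$ onto $\R^k$ is contained in $R$, so its integer points are contained in $R \cap \Z^k = T$, and the reverse inclusion is immediate from the construction. For Condition~\ref{it:prop1}, the crucial observation is that since $X \cup Y \subseteq \{0,1\}^k$, every $t \in T$ is a vertex of $\conv(T)$; consequently the fiber of $\clift_\tih(T)$ above $t$ consists of the single point $\tih(t)$, so $(t,\tih(t))$ lies on both the upper and the lower hull of $\clift_\tih(T)$ and is covered by some upper and some lower facet from the chosen coverings.

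Combining this with Lemma~\ref{lem:X:oplus:Delta:special} applied to $\tih$ gives
\[
\rc(X \oplus \Delta_m) \le \rc(\lift_\tih(X \cup Y),\, \Z^k \times \R) \le \rc(X \cup Y) + \sucn_\tih(X \cup Y) + \slcn_\tih(X \cup Y),
\]
and chaining with the two semi-continuity inequalities yields the target bound. The delicate step in this plan is the verification of Condition~\ref{it:prop1}: it is essential that each lifted vertex $(t,\tih(t))$ lie simultaneously on the upper and lower hulls of $\clift_\tih(T)$, and this is ensured precisely because the hypothesis $T \subseteq \{0,1\}^k$ forces every $t \in T$ to be a vertex of $\conv(T)$.
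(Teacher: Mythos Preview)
Your proposal is correct and follows essentially the same approach as the paper. The paper's proof tersely says that one approximates $h$ by a suitable $\tih$ (keeping $\tih|_X = h|_X$ rational and making the values on $Y$ together with $1$ linearly independent over $\Q$) and then combines Proposition~\ref{prop:relaxation} with $T = X \cup Y$, Lemma~\ref{lem:X:oplus:Delta:special}, and Lemma~\ref{lem:semicont}; you spell out the same combination in more detail, in particular making explicit the relaxation $P$ and correctly noting that the hypothesis $X \cup Y \subseteq \{0,1\}^k$ is what ensures every $t \in T$ is a vertex of $\conv(T)$, so that each lifted point lies on both the upper and lower hulls and Condition~\ref{it:prop1} is met.
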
 

\begin{proof}
  One can approximate $h$ arbitrarily well by $\tih\colon X \cup Y \to \R$
  for which $\tih(x) = h(x)\in \Q$ for every $x \in X$ and the values
  $\tih(y)$ with $y \in Y$, together with 1, are linearly independent over
  $\Q$.
  Thus, the assertion follows by combining Proposition
  \ref{prop:relaxation} with $T=X\cup Y$ and
  Lemmas~\ref{lem:X:oplus:Delta:special} and~\ref{lem:semicont}.
\end{proof}

Recall from the previous subsection that we want to apply a lifting to the
set~$T = X \cup Y$ with~$X = \Delta_k$ and~$Y = \{0,1\}^k \setminus X$.
Due to Lemma~\ref{lem:X:oplus:Delta:simp}, we can use in Step~\ref{S2} of our
strategy any height function~$h$ such that~$h|_T$ is not the restriction of an affine
function and such that~$h(x) \in \Q$ for all~$x \in X$.
Towards proving Theorem~\ref{thm:main}, we select
\[
  h\colon \{0,1\}^k \to \R,\qquad
  h(x_1,\dots,x_k) = (2x_k - 1) \cdot r(x_1,\dots,x_{k-1}),
\]
where~$r\colon \{0,1\}^{k-1} \to \R$ is defined by $r(x_1,\dots,
x_{k-1})=(\sum_{i=1}^{k-1} x_i)^2$.
\begin{remark}
  Function~$r$ is the height function that defines the so-called staircase
  triangulation of~$[0,1]^{k-1}$, see, e.g., \cite[Sec.~16.7.2]{LeeSantos}.
  That is, the statements about simplicial facets of~$\lift_h(X \cup Y, \Z^k
  \times \R)$, which we will make in the following section, can be related
  to a triangulation of~$[0,1]^{k-1}$ if we fix~$x_k$ to~0 or~1.
\end{remark}

\subsection{Relax: Constructing a Small Mixed-Integer Relaxation}
\label{sec:relax}

To complete Step~\ref{S3a}, we use the height function~$h$ from the
preceding subsection and bound $\sucn_h(\{0,1\}^k)$ and
$\slcn_h(\{0,1\}^k)$ from above.
First, we show that the two values are equal, i.e., it is sufficient to
investigate the upper or lower facets.

\begin{prop}\label{prop:up=low}
  One has
  \[
    \slcn_h(\{0,1\}^k) = \sucn_h(\{0,1\}^k).
  \]
\end{prop}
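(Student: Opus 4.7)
The key observation is that the height function $h$ has a built-in symmetry that exchanges the top and the bottom of the convex lift. Specifically, consider the involution $\sigma\colon\{0,1\}^k\to\{0,1\}^k$ defined by $\sigma(x_1,\dots,x_{k-1},x_k)=(x_1,\dots,x_{k-1},1-x_k)$. Then, using the definition of $h$,
\[
h(\sigma(x)) = (2(1-x_k)-1)\cdot r(x_1,\dots,x_{k-1}) = -(2x_k-1)\cdot r(x_1,\dots,x_{k-1}) = -h(x).
\]
Thus the affine bijection $\Phi\colon \R^k\times\R\to\R^k\times\R$, $\Phi(x_1,\dots,x_k,y)\define(x_1,\dots,x_{k-1},1-x_k,-y)$, maps $\lift_h(\{0,1\}^k)$ bijectively onto itself.

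Since $\Phi$ is a linear isomorphism of $\R^{k+1}$, it sends $\clift_h(\{0,1\}^k)$ onto itself as well, and maps facets to facets. Because $\Phi$ reverses the sign of the last coordinate, facets of the form $y\le u^\top x+v$ are sent to facets of the form $y\ge u'^\top x+v'$ and vice versa. That is, $\Phi$ swaps the upper facets and the lower facets of $\clift_h(\{0,1\}^k)$. As $\Phi$ is a bijection between finite vertex sets, it also preserves simpliciality of facets.

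Therefore, whenever $F_1,\dots,F_t$ is a family of simplicial upper facets covering the upper vertex set of $\clift_h(\{0,1\}^k)$, the family $\Phi(F_1),\dots,\Phi(F_t)$ is a family of simplicial lower facets covering the lower vertex set, of the same cardinality $t$. This yields $\slcn_h(\{0,1\}^k)\le \sucn_h(\{0,1\}^k)$. Applying the same argument to $\Phi^{-1}=\Phi$ gives the reverse inequality, and the equality follows. I do not expect any real obstacle here: once the symmetry $h\circ\sigma=-h$ is spotted, the rest is an immediate structural consequence of the linearity of $\Phi$ and the fact that flipping the last coordinate interchanges the notions of upper and lower facet.
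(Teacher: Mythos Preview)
Your argument is correct and is essentially identical to the paper's own proof: both use the affine involution $(x_1,\dots,x_k,y)\mapsto(x_1,\dots,x_{k-1},1-x_k,-y)$, verify via $h(\sigma(x))=-h(x)$ that it restricts to an involution on $\lift_h(\{0,1\}^k)$, and conclude that it swaps simplicial upper and lower facets. The only slip is that you call $\Phi$ a ``linear isomorphism'' when it is merely affine (because of the $1-x_k$ component); since affine bijections still preserve convex hulls, facets, and face dimensions, this does not affect the argument.
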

\begin{proof}
    	The affine map $\phi(x_1,\ldots,x_k,y) \define (x_1,\ldots,x_{k-1},1-x_k, -y)$ is an affine involution on $\R^{k+1}$. Furthermore, $\phi$ can be restricted to an involution on $\lift_h(\{0,1\}^k)$ since 
	\begin{align*} 
		& \phi \bigl ( x_1,\ldots,x_{k-1},x_k, (2 x_k - 1) \cdot r(x_1,\ldots,x_{k-1}) \bigr) 
		\\ = & \bigl ( x_1,\ldots, x_{k-1},1-x_k, - (2 x_k - 1) \cdot r(x_1,\ldots,x_{k-1}) \bigr),
	\\ =  & \bigl(x_1,\ldots,x_{k-1},  1- x_k , (2 (1-x_k) - 1) \cdot (r(x_1,\ldots,x_{k-1}) \bigr) 
	\\ \in & \lift_h(\{0,1\}^k)
	\end{align*} 
for each $(x_1,\ldots,x_k) \in \{0,1\}^k$. This implies that a subset $F\subseteq \clift_h(\{0,1\}^k)$ defines a facet of $\clift_h(\{0,1\}^k)$ if and only if the set $\phi(F)$ does. Now, in order to obtain the identity in the thesis it thus suffices to show that $F$ is an upper facet if and only if~$\phi(F)$ is a lower facet. Indeed, let $f\colon \R^{k+1} \to \R$ be the affine function defining an upper facet~$F$ of $\clift_h(\{0,1\}^k)$, so that $f(x)\leq 0$ is valid for each $x\in \clift_h(\{0,1\}^k)$, equality is attained for $x\in F$, and the coefficient of $x_{k+1}$ is positive. Then $f(\phi(x))\leq 0$ is valid for each $x\in \clift_h(\{0,1\}^k)$, with equality attained for $x\in \phi(F)$, and the coefficient of $x_{k+1}$ is negative, hence the facet~$\phi(F)$ given by the affine function $f(\phi(x))$ is a lower facet of~$\clift_h(\{0,1\}^k)$. The reverse implication holds by an analogous argument.
\end{proof}

Proposition \ref{prop:up=low} allows us to focus on the upper facets of $\clift_h(\{0,1\}^k)$. We will consider two families of simplicial (upper) facets: one will cover vertices with $x_k=0$ and the other will cover vertices with $x_k=1$.
For simplicity we denote a vertex of $\clift_h(\{0,1\}^k)$ by $(x,x_{k+1})$, where $x\in \{0,1\}^k$ and $x_{k+1}=h(x)$. We now define the facets that we will use in our covering.
Let $\pi$ be a permutation of $[k-1]$, $B\subseteq [k-1]$, $b=|B|$, and let $\pi'\colon[k-1-b] \rightarrow [k-1]\setminus B$ be a bijection. Let
  \[F_\pi=\conv\left(\{(\mathbf{0},0), (e_k,0)\}\cup\Big\{ (e_k + \sum_{i = 1}^t e_{\pi(i)}, t^2) : t\in\{1,\dots, k-1\}\Big\}\right), \]
  and
  \begin{align*}
    F_{B,\pi'}= \conv\Big(&\{(e(B),-b^2), (e(B) + e_k, b^2)\} \cup \{(e(B\setminus j),-(b-1)^2), j\in B\}\\
    &\cup\Big\{ (e(B)+e_k + \sum_{i = 1}^{t} e_{\pi'(i)}, (b+t)^2) : t\in\{1,\dots, k-1-b\}\Big\}\Big),
  \end{align*}
  where $e(B)=\sum_{i\in B} e_i$.

\begin{prop}\label{prop:facets}
  $F_\pi$ and $F_{B,\pi'}$ as defined above are simplicial upper facets of $\clift_h(\{0,1\}^k)$.
\end{prop}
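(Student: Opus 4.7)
The plan is to give a direct proof by exhibiting explicit supporting hyperplanes. Since setting $B=\emptyset$ (with $\pi'=\pi$) in the definition of $F_{B,\pi'}$ recovers $F_\pi$, the general case subsumes the first, so I would treat $F_{B,\pi'}$ in full generality and mention $F_\pi$ as a specialization.

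For each claimed facet, three verifications are needed. First, substituting into $h(x)=(2x_k-1)\bigl(\sum_{i<k}x_i\bigr)^2$ immediately checks that every listed point lies in $\lift_h(\{0,1\}^k)$. Second, affine independence of the $k+1$ listed points follows by subtracting the base vertex $(e(B),-b^2)$ (with $b=|B|$) and observing that the first $k$ coordinates of the $k$ resulting difference vectors, $\{-e_j:j\in B\}\cup\{e_k,\,e_k+e_{\pi'(1)},\dots,e_k+\sum_{i\le k-1-b}e_{\pi'(i)}\}$, form a triangular system in $\R^k$ and hence span it. Third, requiring $\ell=h$ on the $k+1$ listed vertices for an affine function $\ell(x)=\sum_{j=1}^k a_jx_j+c$ yields a triangular linear system solved by telescoping to give
\[
a_j=-(2b-1)\ (j\in B),\quad a_j=2\bigl(b+\pi'^{-1}(j)\bigr)-1\ (j\in[k-1]\setminus B),\quad a_k=2b^2,\quad c=b(b-1).
\]

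It then remains to verify $h(x)\le \ell(x)$ for every $x\in\{0,1\}^k$, with equality exactly on the listed vertices. Writing $a=|\{j\in B:x_j=1\}|$ and $c'=|\{j\in[k-1]\setminus B:x_j=1\}|$, and invoking the elementary bound $\sum_{j\in C}a_j\ge 2bc'+c'^2$ (with equality iff $C=\pi'([c'])$, as this corresponds to picking the $c'$ smallest values of $\pi'^{-1}$), the inequality reduces to the manifestly non-negative factored expressions
\[
(b-a)(b-a-1)+2c'(a+b+c')\ \ge\ 0 \quad\text{if } x_k=0,
\]
\[
(b-a)(a+3b+2c'-1)\ \ge\ 0 \quad\text{if } x_k=1,
\]
valid for $0\le a\le b$ and $c'\ge 0$. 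The main obstacle is tracking the equality conditions: for $x_k=0$ the expression vanishes iff $c'=0$ and $a\in\{b-1,b\}$, recovering $x=e(B)$ and $x=e(B\setminus j)$ for $j\in B$; for $x_k=1$, where $a+3b+2c'-1>0$ whenever $b\ge 1$, it vanishes iff $a=b$ and $C=\pi'([c'])$, recovering the prefix vertices $e(B)+e_k+\sum_{i\le c'}e_{\pi'(i)}$. The degenerate case $b=0$ is handled uniformly by the same formulas and reproduces the $F_\pi$ argument, while the remaining computations are routine arithmetic.
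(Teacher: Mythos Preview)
Your proof is correct and follows essentially the same approach as the paper: both exhibit the same supporting affine functional (your $\ell$ coincides with the right-hand side of the paper's inequality~\eqref{ineq:facet0}) and verify validity plus the equality cases. Your presentation is slightly more streamlined---you observe that $F_\pi=F_{\emptyset,\pi}$ so that only one family needs to be treated, and you factor $\ell(x)-h(x)$ into the manifestly non-negative forms $(b-a)(b-a-1)+2c'(a+b+c')$ and $(b-a)(a+3b+2c'-1)$, whereas the paper handles $F_\pi$ separately and verifies the $F_{B,\pi'}$ inequality by a more ad-hoc rearrangement---but the underlying argument is the same.
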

\begin{proof}
    $F_\pi$ can be easily seen to contain exactly $k+1$ affinely independent vectors, hence in order to show that it is a facet, it suffices to exhibit a valid inequality for $\clift_h(\{0,1\}^k)$ that is satisfied at equality precisely by the  points of $F_\pi$. One can argue in the same way for $F_{B,\pi'}$.
    
    Consider the inequality
    \begin{align}
      x_{k+1}\leq \sum_{i=1}^{k-1} (2i-1)x_{\pi(i)}. \label{ineq:facet1}
    \end{align}
    Let $(x,x_{k+1})$ be a vertex of $\clift_h(\{0,1\}^k)$. If $x_k=0$, then \eqref{ineq:facet1} is trivially satisfied, as  the left-hand side is non-positive and the right-hand side would be non-negative, and it is satisfied with equality only if $x=\mathbf{0}$. Now, let $x_k=1$, hence $x_{k+1}=(\sum_{i=1}^{k-1} x_i)^2=\ell^2$ for some $\ell\in \Z_{\geq 0}$. Then the right-hand side of \eqref{ineq:facet1} is the sum of $\ell$ odd numbers, i.e., at least $\sum_{i=1}^\ell (2i-1)= \ell^2=x_{k+1}$, and equality holds exactly when $(x,x_{k+1})\in F_{\pi}$. 
    
    Now, consider the inequality
    	\begin{align}
    x_{k+1}\leq b^2-b - (2b-1)x(B)+\sum_{i=1}^{k-1-b} (2b+2i-1)x_{\pi'(i)} +2b^2x_k,  \label{ineq:facet0}
    \end{align}
    where $x(B)= \sum_{i\in B} x_i$ similarly as before.
    Let $h_1=x(B)$ and $h_2=x([k-1]\setminus B)$. Suppose~$x_k=1$. Then $x_{k+1}=(h_1+h_2)^2$ and the right-hand side of \eqref{ineq:facet0} becomes
    \[
   b^2-b - (2b-1)h_1+\sum_{i=1}^{k-1-b} (2b+2i-1)x_{\pi'(i)} +2b^2\geq b^2-b - (2b-1)h_1 + 2bh_2+ h_2^2+2b^2
    \]
    by lower bounding the summation $\sum_{i=1}^{k-1-b} (2i-1)x_{\pi'(i)}$ with the sum of the first $h_2$ odd numbers.
    The expression above is at least $(h_1+h_2)^2$ if and only if
    \[
    h_1^2 + 2h_1h_2 + (2b-1)h_1 - 2bh_2 \leq 3b^2-b.
    \]
    The latter is easily seen to hold, since $h_1\leq b$.
    
    The case $x_k=0$ follows from the previous case by noticing that the left-hand side of \eqref{ineq:facet0} decreases by $2(h_1+h_2)^2$ and the right-hand side decreases by $2b^2\leq 2(h_1+h_2)^2$.
    
    Finally, one can easily verify that the inequality is satisfied with equality exactly when~$(x,x_{k+1}) \in F_{B,\pi'}$, which concludes the proof.
\end{proof}

We now give our covering. The idea is to cover all vertices of $\clift_h(\{0,1\}^k)$ that have $x_k=1$ with facets of the form $F_\pi$, and the other vertices with facets of the form $F_{B,\pi'}$. 

\begin{prop}\label{prop:boundF1}
    There is a set $\Pi_k$ of $O(2^k/\sqrt{k})$ permutations on $[k-1]$ such that the facets in $\cF_1\define \{F_\pi : \pi \in \Pi_k\}$ cover all vertices of $\clift_h(\{0,1\}^k)$ with $x_k=1$.
\end{prop}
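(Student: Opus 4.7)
The plan is to recast the covering problem as a covering of the Boolean lattice $2^{[k-1]}$ by maximal chains, and then apply the classical symmetric chain decomposition.

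First, I would identify the combinatorial content of the facets $F_\pi$. By definition, the vertices of $F_\pi$ lying in the hyperplane $\{x_k=1\}$ are exactly the points $(e_k + \sum_{i=1}^{t} e_{\pi(i)}, t^2)$ for $t \in \{0,1,\dots,k-1\}$. Under the bijection sending a vertex $(x+e_k,(\sum_i x_i)^2)$ with $x \in \{0,1\}^{k-1}$ to its support $\operatorname{supp}(x) \subseteq [k-1]$, the set of vertices of $\clift_h(\{0,1\}^k)$ with $x_k=1$ corresponds to the whole Boolean lattice $2^{[k-1]}$, and $F_\pi$ covers exactly the subsets belonging to the maximal chain
\[
  \emptyset \subsetneq \{\pi(1)\} \subsetneq \{\pi(1),\pi(2)\} \subsetneq \cdots \subsetneq [k-1].
\]
Conversely, every maximal chain in $2^{[k-1]}$ arises in this way from some permutation $\pi$.

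Consequently, $\cF_1 = \{F_\pi : \pi \in \Pi_k\}$ covers every vertex with $x_k=1$ if and only if the corresponding maximal chains cover all of $2^{[k-1]}$. The problem thus reduces to finding a covering of the Boolean lattice $2^{[k-1]}$ by as few maximal chains as possible. At this point I would invoke the classical theorem of de~Bruijn, Tengbergen and Kruyswijk, which partitions $2^{[n]}$ into exactly $\binom{n}{\lfloor n/2 \rfloor}$ symmetric chains; extending each symmetric chain to a maximal chain yields a covering of $2^{[k-1]}$ by $\binom{k-1}{\lfloor (k-1)/2\rfloor}$ maximal chains. Taking $\Pi_k$ to be the set of permutations encoding these chains completes the construction.

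Finally, by Stirling's approximation,
\[
  |\Pi_k| \;=\; \binom{k-1}{\lfloor (k-1)/2\rfloor} \;=\; \Theta\!\left(\frac{2^{k-1}}{\sqrt{k-1}}\right) \;=\; O\!\left(\frac{2^k}{\sqrt{k}}\right),
\]
which yields the claimed bound. The decisive step is the prefix-to-chain correspondence; the remaining ingredient is a well-known extremal result on the Boolean lattice, so I do not anticipate any substantial technical obstacle.
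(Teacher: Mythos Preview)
Your proposal is correct and follows essentially the same route as the paper: both reduce the covering problem to covering the Boolean lattice $2^{[k-1]}$ by maximal chains via the prefix-to-chain correspondence, and both conclude with the central binomial bound $\binom{k-1}{\lfloor (k-1)/2\rfloor}=\Theta(2^k/\sqrt{k})$. The only cosmetic difference is that the paper invokes Dilworth's theorem together with Sperner's theorem to obtain the minimum number of covering chains, whereas you cite the de~Bruijn--Tengbergen--Kruyswijk symmetric chain decomposition directly; both are standard and yield the same count.
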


    To prove Proposition \ref{prop:boundF1}, we derive an upper bound on the number of simplices needed in a
triangulation of the hypercube~$[0,1]^k$.
Specifically, we consider the triangulation arising from permutations
contained in the symmetric group~$S_k$ of the set~$[k]$.
This triangulation is~$\cT_k = \{ \Delta^\pi : \pi \in S_k\}$
with~$\Delta^\pi = \{x \in \R^k : 1 \geq x_{\pi(1)} \geq \dots \geq
x_{\pi(k)} \geq 0\}$.

\begin{prop}\label{prop:covering_permutations}
  The minimum number of simplices from~$\cT_k$ needed to cover all vertices
  of~$[0,1]^k$ is~$\binom{k}{\lfloor \nicefrac{k}{2} \rfloor}$.
\end{prop}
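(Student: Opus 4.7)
The plan is to reinterpret the covering problem combinatorially in the Boolean lattice and then invoke classical results on chain coverings.

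First, I would identify exactly which vertices of $[0,1]^k$ lie in a given simplex $\Delta^\pi$. If $v \in \{0,1\}^k$ has support $S \define \{i : v_i = 1\}$ of size $s$, then the inequalities $v_{\pi(1)} \ge v_{\pi(2)} \ge \cdots \ge v_{\pi(k)}$ hold if and only if all the $1$-coordinates of $v$ come before all the $0$-coordinates in the order induced by $\pi$, i.e., $\{\pi(1),\ldots,\pi(s)\} = S$. Hence, associating to $\pi$ the maximal chain $\emptyset \subset \{\pi(1)\} \subset \{\pi(1),\pi(2)\} \subset \cdots \subset [k]$ in the Boolean lattice $2^{[k]}$, the vertex indexed by $S$ is covered by $\Delta^\pi$ exactly when $S$ appears as a set in this chain. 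Covering all vertices of $[0,1]^k$ by simplices from $\cT_k$ is therefore equivalent to covering all elements of the Boolean lattice $2^{[k]}$ by maximal chains.

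For the lower bound, I would observe that every maximal chain contains exactly one subset of size $\lfloor k/2 \rfloor$, while the middle layer of $2^{[k]}$ contains $\binom{k}{\lfloor k/2 \rfloor}$ such subsets. Hence at least $\binom{k}{\lfloor k/2 \rfloor}$ maximal chains are needed.

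For the upper bound, I would invoke a symmetric chain decomposition of $2^{[k]}$, which partitions the lattice into $\binom{k}{\lfloor k/2 \rfloor}$ symmetric chains (a classical construction, due to de Bruijn--Tengbergen--Kruyswijk). Each such symmetric chain can be extended to a maximal chain by prepending sets of smaller sizes and appending sets of larger sizes arbitrarily. The resulting $\binom{k}{\lfloor k/2 \rfloor}$ maximal chains then cover every subset of $[k]$, because the original partition already did. Translating back to simplices via the correspondence above yields the matching upper bound $\binom{k}{\lfloor k/2 \rfloor}$.

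The only subtle part is the upper bound; and there the content is entirely in the existence of the symmetric chain decomposition, which is a well-known combinatorial result. Given this, the proof is essentially a dictionary translation between the geometric covering problem for $\cT_k$ and the classical chain-covering problem on the Boolean lattice.
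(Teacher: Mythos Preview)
Your proof is correct and shares with the paper the essential translation step: vertices of $[0,1]^k$ correspond to subsets of $[k]$, simplices $\Delta^\pi$ correspond to maximal chains in the Boolean lattice, and the covering problem becomes the problem of covering $2^{[k]}$ by maximal chains.

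Where you diverge is in how you resolve this chain-covering problem. The paper invokes Dilworth's theorem to equate the minimum chain cover with the maximum antichain, and then Sperner's theorem to identify that maximum as $\binom{k}{\lfloor k/2\rfloor}$. You instead argue the two bounds separately: the lower bound by the direct pigeonhole observation that each maximal chain meets the middle layer in exactly one set, and the upper bound by citing the de~Bruijn--Tengbergen--Kruyswijk symmetric chain decomposition and extending each symmetric chain to a maximal one. Both routes are standard; your lower bound is arguably more elementary than going through Sperner, while the paper's use of Dilworth gives both bounds in one stroke. The symmetric chain decomposition has the additional advantage of being constructive, so your covering family could in principle be written down explicitly.
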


\begin{proof}
  With each vertex~$x$ of~$[0,1]^k$, we associate the set~$A(x) = \{ i \in
  [k] : x_i = 1\}$.
  Moreover, let~$a(x) = \card{A(x)}$.
  Then, a simplex~$\Delta^\pi \in \cT_k$ contains~$x$ if and only
  if~$A(x) = \{\pi(1), \dots, \pi(a(x))\}$.
  The minimum number of simplices needed to cover all vertices of~$[0,1]^k$
  is thus equivalent to the minimum cardinality of a set~$\Pi \subseteq
  S_k$ of permutations such that, for each vertex~$x$ of~$[0,1]^k$, there
  is a permutation~$\pi \in \Pi$ with~$A(x) = \{\pi(1), \dots,
  \pi(a(x))\}$.

  To characterize such a minimum set~$\Pi$, consider the poset
  of all subsets of~$[k]$ ordered by inclusion.
  Note that there is a one-to-one correspondence between permutations~$\pi$ of~$[k]$
  and chains~$\emptyset \subseteq \{i_1\} \subseteq \{i_1,i_2\} \subseteq
  \dots \subseteq \{i_1, \dots, i_k\}$ in this poset via~$\pi(1) =
  i_1,\dots, \pi(k) = i_k$.
  Consequently, since subsets of~$[k]$ correspond to vertices of~$[0,1]^k$,
  the minimum number of simplices needed to cover all vertices of~$[0,1]^k$
  is the same as the minimum number of chains needed to cover all elements
  of the poset.
  By Dilworth's theorem, see, e.g., the book~\cite{VanLintWilson1993}, this number is the same as the length of a largest
  antichain in the poset.
  Since all subsets of~$[k]$ of  cardinality~$\lfloor \frac{k}{2} \rfloor$  are
  pairwise incomparable, the length of a largest antichain is at
  least~$\binom{k}{\lfloor \nicefrac{k}{2} \rfloor}$.
  Moreover, this bound is tight due to a result by Sperner~\cite{Sperner1928}.
\end{proof}

Proposition~\ref{prop:boundF1} thus follows, because~$\binom{k}{\lfloor
  \frac{k}{2} \rfloor} \in \Theta(\frac{2^k}{\sqrt{k}})$, see
\cite[Sect.~4]{odlyzko1995asymptotic}.

Now, to cover the vertices of $\clift_h(\{0,1\}^k)$ with $x_k=0$, we will select facets of the form $F_{B,\pi'}$. Notice that only the choice of the set $B$ determines which vertices with $x_k=0$ are in $F_{B,\pi'}$, hence the choice of the bijection $\pi'$ is irrelevant. We then fix one such bijection $\pi_B$ for each~$B$, and simply write $F_{B}\define F_{B,\pi_B}$.

\begin{prop}\label{prop:boundF0}
  There is a family $\mathcal{B}$ of $O(2^k\frac{\log(k)}{k})$ subsets of $[k-1]$ such that the facets in $\cF_0\define \{F_{B}: B \in \mathcal{B}\}$ cover all vertices of $\clift_h(\{0,1\}^k)$ with $x_k=0$.
\end{prop}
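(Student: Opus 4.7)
The plan is to first translate the proposition into a combinatorial covering problem on~$2^{[k-1]}$, and then to solve that problem probabilistically. A direct inspection of the definition of $F_{B,\pi_B}$ shows that its vertices with $x_k=0$ are exactly $(e(B),-b^2)$ and $(e(B\setminus\{j\}),-(b-1)^2)$ for $j\in B$, where $b=|B|$. Hence the proposition reduces to exhibiting a family $\mathcal{B}\subseteq 2^{[k-1]}$ of size $O(2^k\log(k)/k)$ such that, for every $S\subseteq[k-1]$, there is some $B\in\mathcal{B}$ with $S\subseteq B$ and $|B\setminus S|\leq 1$. This is an asymmetric $1$-covering problem on the Boolean lattice: each $B$ of size $b$ covers precisely $b+1\leq k$ subsets, namely itself and its $b$ immediate subsets.

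To build such $\mathcal{B}$, I would use a probabilistic construction with an alteration step. Fix a constant $c\geq 2$ and form a random family $\mathcal{B}_1$ by including each $B\subseteq[k-1]$ independently with probability $p_{|B|}\define\min\{1,c\log(k)/(k-|B|+1)\}$, then set $\mathcal{B}\define\mathcal{B}_1\cup\{S\subseteq[k-1]:S\text{ is not covered by }\mathcal{B}_1\}$, which is a valid covering by construction. The probabilities $p_b$ grow as $b$ approaches $k-1$, so that large sets, which have few covering candidates (in particular $[k-1]$, which can only be covered by itself), are included almost surely, while middle-sized and small sets are included only sparsely.

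The core analytic step is to bound $\mathbb{E}[|\mathcal{B}|]=\mathbb{E}[|\mathcal{B}_1|]+\mathbb{E}[Y]$, where $Y$ counts the subsets not covered by $\mathcal{B}_1$. Sizes $b\geq k-c\log(k)$ force $p_b=1$ and contribute at most $\sum_{b\geq k-c\log(k)}\binom{k-1}{b}\leq 2^{O((\log k)^2)}$ to $\mathbb{E}[|\mathcal{B}_1|]$, which is negligible; for the remaining sizes, the identity $\sum_b\binom{k-1}{b}/(k-b)=(2^k-1)/k$ yields $\mathbb{E}[|\mathcal{B}_1|]=O(2^k\log(k)/k)$. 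For $\mathbb{E}[Y]$, a set $S$ of size $s<k-c\log(k)$ is uncovered by $\mathcal{B}_1$ with probability at most $(1-p_{s+1})^{k-1-s}\leq\exp(-c\log(k)(k-1-s)/(k-s))\leq e\cdot k^{-c}$, so $\mathbb{E}[Y]\leq e\cdot 2^{k-1}/k^c=o(2^k\log(k)/k)$ once $c\geq 2$. The probabilistic method then produces a deterministic realization of $\mathcal{B}$ of size $O(2^k\log(k)/k)$.

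The hard part is tuning the probabilities $p_b$: on one hand, large sets must be included with probability close to $1$ because subsets near the top of the lattice have very few covering candidates; on the other hand, middle-sized sets dominate $\sum_b\binom{k-1}{b}$ and so their inclusion probability must stay small. The choice $p_b=c\log(k)/(k-b+1)$ precisely balances these two tensions and produces only an $O(\log k)$ overhead above the information-theoretic lower bound $\Omega(2^k/k)$.
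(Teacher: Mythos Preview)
Your proposal is correct and follows essentially the same route as the paper: reduce to a ``dominating family'' problem in the Hasse diagram of $2^{[k-1]}$ and solve it by the probabilistic method with alteration (random selection plus adding the uncovered sets), exactly as in Alon--Spencer's domination-number argument. The only cosmetic differences are your choice of inclusion probability $p_b=\min\{1,\,c\log(k)/(k-b+1)\}$ versus the paper's $p_i=\ln(k-i)/(k-i)$, and your use of the closed-form identity $\sum_b\binom{k-1}{b}/(k-b)=(2^k-1)/k$ where the paper instead manipulates binomials via $\binom{k}{\ell}/(\ell+1)=\binom{k+1}{\ell+1}/(k+1)$; both routes yield the same $O(2^k\log(k)/k)$ bound.
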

\begin{proof}
  Notice that a vertex of $\clift_h(\{0,1\}^k)$ with $x_k=0$ is in $F_B$ if and only if the support of $(x_1,\dots,x_{k-1})$ is either equal to $B$ or to a subset of $B$ of size $b-1$ (where again~$b=|B|$). Consider the Hasse diagram $D=(V,A)$ of the poset $2^{[k-1]}$, ordered by inclusion. It suffices to give a family $\mathcal{B}$ of vertices of $D$ with the property that, for any set $X\subseteq [k-1]$, either $X\in \mathcal{B}$ or $X$ ``points'' to a set in $\mathcal{B}$, i.e., $(X,B)\in A$ for some~$B \in \mathcal{B}$. We construct this family, which we call ``dominating'', using the probabilistic method, similarly as done in \cite[Theorem 1.2.2]{alon2016probabilistic} for bounding the domination number of undirected graphs. 
  
  Let $\cX\subseteq V$ be formed by picking each subset of $[k-1]$ with a probability $p_i$ (to be specified later) that depends on its size $i$. Let $\cY\subseteq V$ contain the sets of $V\setminus \cX$ that do not point to any set in $\cX$. Notice that the family $\cX\cup \cY$ is dominating. We now bound its expected size. We have that
  \begin{align*}
    \mathbb{E}(|\cX|) &=\sum_{i=1}^{k-1} {k-1 \choose i} p_i,\\
    \mathbb{E}(|\cY|)&\leq \sum_{i=1}^{k-1} {k-1 \choose i-1} (1-p_i)^{k-i},
  \end{align*}
  where for the latter we used the fact that a set of size $i-1$ points to exactly $k-i$ sets, none of which has to be picked in $\cX$.
  We now set $p_i=\frac{\ln(k-i)}{k-i}$, and use the fact that $1-p\leq
  e^{-p}$ for all reals $p$. We then get
  \begin{align*}
    \mathbb{E}(|\cX\cup \cY|)
    &\leq \sum_{i=1}^{k-1}\Big( {k-1 \choose i}
    \frac{\ln(k-i)}{k-i}+{k-1 \choose i-1} \frac{1}{k-i}\Big)\\
    &\leq 2\ln(k) \sum_{i=1}^{k-1} \frac{{k \choose i}}{k-i}= 2\ln(k) \sum_{i=1}^{k-1} \frac{{k \choose k-i}}{k-i}\\
    &=2\ln(k)  \sum_{i=1}^{k-1}  \frac{{k+1 \choose k-i+1}}{k+1}\cdot \frac{k-i+1}{k-i}\leq 4\frac{\ln(k)}{k+1} \sum_{i=1}^{k-1}  {k+1 \choose k-i+1}\leq 2^{k+3}\frac{\ln(k)}{k+1},
  \end{align*}
  where the second equality follows from the known identity $\frac{{k \choose \ell}}{\ell+1}=\frac{{k+1 \choose \ell+1}}{k+1}$, and the last inequality from the binomial expansion.
\end{proof}

We can now obtain our bound on $\sucn_h(\{0,1\}^k)$ (hence, on $\slcn_h(\{0,1\}^k)$) by putting together Propositions \ref{prop:boundF1} and~\ref{prop:boundF0}.

\begin{cor}\label{cor:sucn}
  We have $\sucn_h(\{0,1\}^k)\in O(\frac{2^k}{\sqrt{k}})$.
\end{cor}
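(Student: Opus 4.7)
The plan is to combine Propositions~\ref{prop:boundF1} and~\ref{prop:boundF0} directly. By definition of $\sucn_h(\{0,1\}^k)$, any family of simplicial upper facets of $\clift_h(\{0,1\}^k)$ whose union contains all vertices provides an upper bound, so it suffices to exhibit one such family. The natural candidate is $\cF_0 \cup \cF_1$, where $\cF_0$ is the family from Proposition~\ref{prop:boundF0} and $\cF_1$ is the family from Proposition~\ref{prop:boundF1}.

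The first step is to verify that every vertex of $\clift_h(\{0,1\}^k)$ is covered. A vertex has the form $(x, h(x))$ with $x \in \{0,1\}^k$, and either $x_k = 0$ or $x_k = 1$. In the former case, Proposition~\ref{prop:boundF0} guarantees a facet in $\cF_0$ containing the vertex; in the latter, Proposition~\ref{prop:boundF1} provides a facet in $\cF_1$. Since Proposition~\ref{prop:facets} ensures that all these facets are simplicial upper facets, $\cF_0 \cup \cF_1$ is a valid covering.

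The second step is the asymptotic estimate
\[
\sucn_h(\{0,1\}^k) \le |\cF_0| + |\cF_1| = O\!\left(\frac{2^k \log k}{k}\right) + O\!\left(\frac{2^k}{\sqrt{k}}\right).
\]
Since $\frac{\log k}{k} = o\!\left(\frac{1}{\sqrt{k}}\right)$ as $k \to \infty$, the second term dominates and we obtain $\sucn_h(\{0,1\}^k) \in O(2^k/\sqrt{k})$, as required.

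There is no real obstacle here, as both constituent bounds have already been established; the only point to keep in mind is the straightforward asymptotic comparison $\log(k)/k = o(1/\sqrt{k})$, which ensures that the simpler $\binom{k}{\lfloor k/2\rfloor}$-type bound of $\cF_1$ governs the final estimate.
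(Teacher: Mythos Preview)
Your proposal is correct and matches the paper's approach exactly: the paper simply states that the bound follows by putting together Propositions~\ref{prop:boundF1} and~\ref{prop:boundF0}, which is precisely the union $\cF_0 \cup \cF_1$ you describe, followed by the same asymptotic comparison.
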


\subsection{Putting All Pieces Together}

With the preparatory work of the preceding sections, we are able to provide a
proof of Theorem~\ref{thm:main}.

\begin{proof}[Proof of Theorem~\ref{thm:main}]
  First, assume that there is a positive integer~$k$ such that~$d =
  2^{k}-1$.
  Let~$m = d - k$.
  Furthermore, let~$\tilde{\Delta}_d$ as well as~$T = X \cup Y$ be defined
  as in Section~\ref{sec:project}.
  Then, $X = \Delta_k$ and~$Y = \{0,1\}^k \setminus X$.
  Moreover, as defined at the end of Section~\ref{sec:lift}, let~$h\colon \{0,1\}^k
  \to \R$ be given by~$h(x_1, \dots, h_k) = (2x_k - 1) \cdot
  r(x_1,\dots,r_{k-1})$.
    By Lemma~\ref{lem:X:oplus:Delta:simp},
  \[
    \rc(\Delta_d)
    =
    \rc(\Delta_k \oplus \Delta_m)
    \leq
    \rc(X \cup Y)
    +
    \sucn_h (X \cup Y)
    +
    \slcn_h (X \cup Y).
  \]
  As~$X \cup Y = \{0,1\}^k$ and~$\rc(\{0,1\}^k) \leq 2k$, we derive from
  Proposition~\ref{prop:up=low} and Corollary~\ref{cor:sucn} the upper
  bound~$\rc(\Delta_d) \in O(\frac{2^k}{\sqrt{k}})$.
  Because~$k = \log_2(d+1)$, the assertion follows.

  If there is no positive integer~$k$ with~$d = 2^k - 1$, then let~$d'$ be
  the smallest integer greater than~$d$ that admits such a representation.
  Note that~$d < d' \leq 2d$ and~$\rc(\Delta_d) \leq \rc(\Delta_{d'})$.
  Thus, by the above argumentation,
  \[
    \rc(\Delta_d)
    \leq
    \rc(\Delta_{d'})
    \in
    O(\tfrac{d'}{\sqrt{\log(d')}})
    =
    O(\tfrac{d}{\sqrt{\log(d)}}).
  \]
\end{proof}

\section{Outlook} \label{outlook} 

\newcommand{\bF}{{\mathbb{F}}} 
\newcommand{\Ralg}{{\mathbb{R}_{\operatorname{alg}}}}

In light of our new results, we formulate the following open problems: 

\begin{enumerate}[label=(P\arabic*) ]
	\item\emph{For given $d \in \Z_{>0},$ characterize all finite subsets $X$ of $\Z^d$ satisfying \mbox{$\rc(X) < \rc_\Q(X)$.}} The first open case is $d=5$.
	\item \emph{Improve  the asymptotic upper bound on  $\rc(\Delta_d)$.} The best known lower bound on~$\rc(\Delta_d)$ is of asymptotic order $\Omega(\log d)$, see \cite{averkov2021complexity}. In contrast, our asymptotic upper bound $O( \frac{d}{\sqrt{\log d}})$ is only mildly sublinear in $d$. It is widely open what asymptotic behavior for $\rc(\Delta_d)$ one should expect, but it is quite likely that our upper bound is not tight. 
	\item \label{P3} As $\rc(X)$ and $\rc_\Q(X)$ do not coincide in general, there are two algorithmic problems that one can ask, for every fixed dimension $d$ as well as for an unspecified $d$: \emph{Determine whether $\rc(X)$ and $\rc_\Q(X)$ are algorithmically computable.} The existence of sets $X$ satisfying $\rc(X) < \rc_\Q(X)$ is an additional complication that makes an algorithmic computation of these two values quite an intricate task, as explained in~\cite{AverkovEtAl2022}. 
	The authors of \cite{AverkovEtAl2021}
	introduce a non-increasing sequence~$(u_k)_{k \in \N}$ of upper bounds on~$\rc_\Q(X)$, which are computable for each given $k$. This sequence stabilizes by attaining 
	the value~$\rc_\Q(X)$ after finitely many steps.
	However, it is open whether one can decide for fixed~$k$ whether~$u_k =
	\rc_\Q(X)$.
	One way to decide this is by providing a computable matching lower bound.
        In~\cite{kaibel2015lower}, a mechanism to derive lower bounds via so-called hiding sets is presented.
        Deriving a lower bound~$\ell$ via hiding sets yields~$\ell \leq \rc(X) \leq \rc_\Q(X) \leq u_k$.	
        Thus, this approach can only work if~$\rc(X) = \rc_\Q(X)$.
	In~\cite{averkov2021complexity} it is shown that $\rc(X)=\rc_\Q(X)$ for any
	$X\subseteq \Z^d$ when $d\leq 4$, and for other special cases.
        In this case, one might wonder about a computable lower bound on~$\rc(X)$ that is tight.
	Due to Corollary~\ref{cor:dimd} and Theorem~\ref{thm:main}, however,
	$\rc(X)$ can be strictly smaller than~$\rc_\Q(X)$, and thus, matching upper bounds on~$\rc_\Q(X)$ and lower bounds on~$\rc(X)$ do not exist in general.
	\item \emph{Determine the minimum of $\rc(X)$ over all sets $X \subseteq \Z^d$ of dimension $d=\dim(X)$}. This quantity can be analyzed asymptotically, for $d \to \infty$, as well as for concrete choices of $d$. It would be interesting to see if the minimum is attained for $X= \Delta_d$. 
	\item 	\emph{Study the relaxation complexity with respect to arbitrary sub-fields of $\R$.} 
	One can introduce the relaxation complexity $\rc_\bF(X)$ of $X \subseteq \Z^d$ with respect to relaxations of $X$ described by inequalities with coefficients in $\bF$. It is clear that $\rc_\bF(X)$ is monotone in $\bF$ with respect to inclusion. Our arguments show that the estimate $\rc_\bF(X) \in O(\frac{d}{\sqrt{\log(d)}})$  holds for every field of dimension at least $d+1$ over $\Q$, as we only need $d+1$ numbers in $\R$ that are linearly independent over $\Q$ to prescribe appropriate heights. 
	
	For $d=5$, our construction shows $\rc_{\Q[\sqrt{2}]}(\Delta_5)=5$, where the field $\Q[\sqrt{2}]$ has dimension two over $\Q$. This is the smallest dimension of $\bF$ over $\Q$ with $\rc_{\bF}(\Delta_5)=5$, as the only one-dimensional field over $\Q$ is $\Q$ itself. 	 
	It is likely that $\rc_\bF(\Delta_5)=5$ holds for every field $\bF \subset \R$ of dimension two over $\Q$, which just means that $\rc_{\Q[r]}(\Delta_5) = 5$ should be true for every $r \in \R \setminus \Q$. 
	\item \emph{When  $\bF$ is the field of real algebraic numbers, determine if $\rc_\bF(X)$ is computable for a given finite $X \subset \Z^d$ and if the equality $\rc_{\bF}(X) = \rc(X)$ holds for every finite~$X \subset \Z^d$.}
	Indeed, our results show that the choice between rationals and arbitrary reals makes a difference, but the reals in their whole generality include numbers that one cannot even approximate algorithmically, so it makes sense to ask if some reasonably constructive subfield of the reals would be enough for computing the relaxation complexity. The field of algebraic numbers could be a good candidate.
\end{enumerate}

\paragraph*{Acknowledgments.} We would like to thank Matthias Schymura for his valuable comments on this work. We are also grateful to Volker Kaibel and Stefan Weltge.

\bibliographystyle{plainnat}
\bibliography{references} 

\end{document}